\newcommand{\transpose}{*}
\newcommand{\real}{\mathbb{R}}
\renewcommand{\natural}{\mathbb{N}}
\newcommand{\Matrix}[1]{\boldsymbol{#1}}
\newcommand{\Tensor}[1]{\boldsymbol{\mathcal{#1}}}
\newcommand{\Operator}[1]{\mathscr{#1}}
\newcommand{\Tucker}{\operatorname{Tucker}}
\newcommand{\bigO}{\mathscr{O}}
\newcommand{\e}{\mathrm{e}}
\newcommand{\diff}{\mathrm{d}}
\newcommand{\indexFunction}{\mathbf{1}}
\newcommand{\hierarchical}{\mathcal{H}}
\newcommand{\tree}{\mathsf{T}}
\newcommand{\children}{\operatorname{children}}
\newcommand{\diam}{\operatorname{diam}}
\newcommand{\dist}{\operatorname{dist}}
\newcommand{\inter}{\Operator{J}}
\newcommand{\HTLR}{\operatorname{HTLR}}
\newcommand{\quasi}{\mathrm{qu}}
\newcommand{\uni}{\mathrm{uni}}
\newcommand{\as}{\mathrm{as}}
\newcommand{\HConstruct}{\operatorname{Construct}}
\newcommand{\HMultV}{\operatorname{HMultVec}}
\newcommand*{\Rmnum}[1]{\uppercase\expandafter{\romannumeral #1\relax}}
\newcommand*{\construct}{\mathrm{c}}
\newcommand*{\apply}{\mathrm{a}}
\newcommand*{\h}{\mathrm{h}}
\newcommand*{\rand}{\mathrm{r}}
\newtheorem{example}{Example}[section]
\numberwithin{equation}{section}
\numberwithin{figure}{section}
\numberwithin{table}{section}
\crefname{hypothesis}{Hypothesis}{Hypotheses}
\crefname{fact}{Fact}{Facts}
\title{Hierarchical Tucker Low-Rank Matrices: Construction and
Matrix-Vector Multiplication\thanks{Submitted to the editors DATE.
\funding{ This research was supported by the National Natural Science
Foundation of China (NSFC) under grant numbers 12271109, the Science and
Technology Commission of Shanghai Municipality (STCSM) under grant numbers
22TQ017 and 24DP2600100, and the Shanghai Institute for Mathematics and
Interdisciplinary Sciences (SIMIS) under grant number
SIMIS-ID-2024-(CN).}}}
\author{
Yingzhou Li\thanks{School of Mathematical Sciences, Fudan University; Shanghai Key Laboratory for Contemporary Applied Mathematics, Fudan University(\email{yingzhouli@fudan.edu.cn}).}
\and
Jingyu Liu\thanks{School of Mathematical Sciences, Fudan University(\email{jyliu22@m.fudan.edu.cn}).}}
\begin{document}

\maketitle

\begin{abstract}
    In this paper, a hierarchical Tucker low-rank (HTLR) matrix is proposed to approximate non-oscillatory kernel functions in linear complexity.
    The HTLR matrix is based on the hierarchical matrix, with the low-rank blocks replaced by Tucker low-rank blocks.
    Using high-dimensional interpolation as well as tensor contractions, algorithms for the construction and matrix-vector multiplication of HTLR matrices are proposed admitting linear and quasi-linear complexities respectively.
    Numerical experiments demonstrate that the HTLR matrix performs well in both memory and runtime.
    Furthermore, the HTLR matrix can also be applied on quasi-uniform grids in addition to uniform grids, enhancing its versatility.
\end{abstract}

\begin{keywords}
fast algorithms, \(\hierarchical\)-matrix, integral equations, Tucker
decomposition
\end{keywords}


\section{Introduction} \label{sec:introduction}

This paper considers the integral equation~(IE) taking the form
\begin{equation} \label{eq:IE}
    a(\Matrix{x}) u(\Matrix{x}) +
    \int_{\Omega} k(\Matrix{x}, \Matrix{y}) u(\Matrix{y}) \diff \Matrix{y}
    = f(\Matrix{x}), \quad x \in \Omega,
\end{equation}
where \(\Omega = [0, 1]^d\) is the unit box in \(\real^{d}\).
Here the \textit{kernel function}~\(k(\Matrix{x}, \Matrix{y})\) as well as the function \(a(\Matrix{x})\) in~\eqref{eq:IE} are given.
In some cases, the function \(u(\Matrix{x})\) is known, and we would like to evaluate the integral equation~\eqref{eq:IE} to obtain \(f(\Matrix{x})\).
This is known as the forward evaluation or the application. 
In other cases, the function \(f(\Matrix{x})\) is known and \(u(\Matrix{x})\) is unknown.
We would like to solve the integral equation~\eqref{eq:IE} to obtain \(u(\Matrix{x})\).
This is known as the backward evaluation or the inversion.
Both forward and backward evaluations are of great importance in practice.

Discretizing~\eqref{eq:IE} using typical approaches leads to a dense
linear system
\begin{equation} \label{eq:Au=f}
    \Matrix{A} \Matrix{u} = \Matrix{f},
\end{equation}
where \(\Matrix{A} \in \real^{N \times N}\) is a dense matrix and \(N\) is the number of discretization points.
For instance, the Nystr{\"o}m scheme of~\eqref{eq:IE} on a uniform grid with \(n\) points in each direction yields the following linear system of size \(N = n^d\):
\begin{equation} \label{eq:Nystrom_IE_uni}
    a(\Matrix{x}_{i}) u_{i} + \sum_{j = 1}^{N} K_{i, j} h^d u_j = f(\Matrix{x}_{i}), \quad i = 1, \dotsc, N,
\end{equation}
in which \(u_{i} \approx u(\Matrix{x}_{i})\) approximates the value on the grid points.
The matrix \(\Matrix{K}\) can be viewed as a kernel matrix, with the exception of a modification on the diagonal.
Its entries are defined as follows:
\begin{equation} \label{eq:K_uni}
    K_{i, j} \coloneq
    \begin{cases}
        k(\Matrix{x}_{i}, \Matrix{x}_{j}), & j \neq i, \\
        \int_{\Omega_{i}} k(\Matrix{x}_{i}, \Matrix{y}) \diff \Matrix{y} / h^d, & j = i,
    \end{cases}
\end{equation}
where the integral is computed numerically.
The product \(K_{i, j} h^d\) is the approximated integral of \(k(\Matrix{x}_{i}, \cdot)\) on a square domain \(\Omega_{j}\) with width \(h = 1 / n\) and center \(\Matrix{x}_{j}\).
The discretization~\eqref{eq:Nystrom_IE_uni} can also be regarded as a collocation method using piecewise-constant basis functions whose supports are \(\{\Omega_{i}\}\), except that the off-diagonal entries use an approximation of the integral.

\subsection{Related Work} \label{subsec:related_work}

Typically, dense direct methods for solving~\eqref{eq:Au=f} such as the LU factorization
take the \(\bigO(N^3)\) time complexity and \(\bigO(N^2)\) storage complexity, which becomes prohibitive as \(N\) increases to moderately large values.

Fast algorithms have been designed to reduce both the time and storage complexity in addressing~\eqref{eq:Au=f}.
The key insight is the observation that off-diagonal submatrices of \(\Matrix{A}\) in~\eqref{eq:Nystrom_IE_uni} are numerically low-rank if the kernel function \(k\) is smooth and not highly oscillatory away from the diagonal.
Such kernel functions are commonly encountered in practice, including the Green's function for elliptic PDEs and low frequency wave equations, as well as Gaussian kernels.
This \textit{low-rank property} brings the possibility to enhance both storage efficiency and computational performance for the matrix.
This idea can be traced back to the \textit{Barnes--Hut algorithm}~\cite{Barnes_Hut_1986}~(also known as the tree-code) and \textit{fast multipole methods}~(FMM)~\cite{adaptive_FMM, black_box_FMM, classical_FMM, kernel_independent_FMM}, which accelerate the matrix-vector multiplication for kernel matrices.
Besides, Hackbusch and his collaborators have introduced \textit{hierarchical matrices}~\cite{Hmatrix_partI, Hmatrix_partII}~(also known as \(\hierarchical\)-matrices and \(\hierarchical^2\)-matrices).
These matrices achieve quasi-linear or linear complexity for most matrix algebraic operations including matrix-vector multiplication, matrix-matrix multiplication, matrix LU factorization, etc~\cite{intro_Hmatrix_H2matrix, Hmatrix_partI, Hackbusch_2015_book, H2matrix, Hmatrix_partII, DHMAT}.
Notably, \(\hierarchical\)-matrices and \(\hierarchical^2\)-matrices can be viewed as the algebraic version of the Barnes--Hut algorithm and FMM respectively.

\textit{Hierarchical block-separable}~(HBS) matrices (also referred to as \textit{hierarchical semi-separable}, HSS, matrices)~\cite{Martinsson_Rokhlin_2005} represent another class of fast algorithms that accelerate kernel matrix operations. 
These matrices are closely related to \(\hierarchical^2\)-matrix under weak admissibility condition.
Various HBS matrix factorization algorithms are proposed to achieve quasi-linear or linear complexity for matrix-vector multiplication and solving linear systems~\cite{ULV_HSS, Corona_Martinsson_Zorin_2015, Gillman_Young_Martinsson_2012, Greengard_Gueyffier_Martinsson_Rokhlin_2009, fast_HSS}.
Another family of fast algorithms factorizes the matrix \(\Matrix{A}\) as a product of block sparse lower and upper triangular matrices, where each block could be applied or inverted in \(\bigO(1)\) complexity.
Algorithms in this category include \textit{recursive skeletonization factorization}~\cite{RS, RS_strong_admissibility} and \textit{hierarchical interpolative factorization}~\cite{HIFIE}.
Two key techniques employed for algorithms in this family are the \textit{interpolative decomposition}~\cite{ID} and the \textit{proxy surface}~\cite{Martinsson_2019_book}.
When the kernel function is smooth without singularity, such as in the case of Gaussian kernels, the low-rank approximation can be applied more aggressively, extending to those diagonal blocks as well~\cite{BBF_theory, BBF}.

Though aforementioned fast algorithms achieve low complexity with respect to the matrix size, they still face the challgenge of the \textit{curse of dimensionality}~(CoD).
When the problem dimension \(d\) increases, the prefactor in the complexity with respect to the size of each direction scales exponentially with \(d\).
To address this issue, tensor low-rank decompositions are commonly used techniques.
In \cite{Kolda_Bader_2009}, the authors provide a comprehensive introduction to tensor decompositions such as the \textit{CANDECOMP/PARAFAC}~(CP) decomposition and the \textit{Tucker decomposition}, both of which can be viewed as a generalization of the matrix singular value decomposition~(SVD).
Tensor decompositions could be computed by various numerical algorithms, including \textit{high-order SVD}~(HOSVD)~\cite{HOSVD}, \textit{higher-order orthogonal iteration}~(HOOI)~\cite{HOOI}, \textit{sequentially truncated higher order SVD}~(STHOSVD)~\cite{STHOSVD}, and their randomized versions~\cite{RTSMS, RTucker}.
Besides, \textit{tensor train}, \textit{tensor ring}, and \textit{tensor network} are another family of tensor decompositions widely used in practice, especially in computational physics and chemistry~\cite{TN1, TT}.

Blending the structure of hierarchical decomposition and tensor decomposition has the potential to yield linear or quasi-linear fast algorithms with improved prefactors.
\textit{Hierarchical Kronecker tensor-product}~(HKT) approximation~\cite{Gavrilyuk_Wolfgang_Khoromskij_2005, Hackbusch_Khoromskij_2006a, Hackbusch_Khoromskij_2006b,
Hackbusch_Khoromskij_2008, Hackbusch_Khoromskij_Tyrtyshnikov_2005} combines these two techniques, resulting om a quasi-linear representation of some high-dimensional integral and elliptic operators.
Recently, \textit{tensor butterfly algorithm}~\cite{TensorBF} has been proposed to represent
high-dimensional oscillatory integral operators, which combines the \textit{butterfly factorization}~\cite{IBF, BF, MBF} and Tucker decomposition, further enhancing computational efficiency in high-dimensional contexts.

\subsection{Contributions} \label{subsec:contributions}

In this paper, a \textit{hierarchical Tucker low-rank}~(HTLR) matrices is defined by replacing the low-rank blocks in an \(\hierarchical\)-matrix with \textit{Tucker low-rank}~(TLR) matrices.
The Tucker decomposition enables TLR matrices to mitigate the CoD compared to conventional low-rank matrices, resulting in lower memory requirements and faster computational runtime.
Our analysis demonstrates that only \(\bigO(N)\) storage is needed to store an HTLR matrix of size \(N\).
Linear construction and quasi-linear application algorithms are proposed for HTLR matrices.
In the construction algorithm, the TLR matrices are generated via multidimensional interpolations.
A theoretical error bound is established for a specific class of kernel functions.
The application algorithm uses tensor contractions for matrix-vector multiplication, achieving an improvement in the prefactor of the dominant complexity compared to \(\hierarchical\)-matrices.
While the HTLR matrix is first introduced and discussed in the context of problems with a uniform grid discretization, we also present its application on a quasi-uniform grid discretization.
Numerical experiments are conducted across various settings.
The results not only support our complexity analysis but also offer compelling evidence for the efficiency of HTLR matrices.

\subsection{Organization} \label{subsec:organization}

The rest of the paper is organized as follows.
Section~\ref{sec:preliminaries} introduces the notations used throughout the paper and provides a brief review of \(\hierarchical\)-matrices.
In Section~\ref{sec:hierarchical_tucker_low_rank_matrices}, HTLR matrices are introduced for problems on uniform grids.
We demonstrate that HTLR matrices only require linear storage complexity.
The application of HTLR matrices on a quasi-uniform grid is also discussed there.
The construction and application algorithms of HTLR matrices, as well as their complexity analysis, are described in detail in Section~\ref{sec:HTLR_construction_matrix_vector_multiplication}.
Section~\ref{sec:numerical_results} presents numerical results for two-dimensional and three-dimensional problems to demonstrate the
performance of HTLR matrices.
Finally, Section~\ref{sec:conclusion_future_work} concludes the paper with a discussion on future work.


\section{Preliminaries} \label{sec:preliminaries}

\subsection{Notations} \label{subsec:notations}

For a positive integer \(N\), the index set \(\{1, \dotsc, N\}\) is denoted by \([N]\).
The notation \(|\cdot|\) denotes either the number of elements in a set or the area of a domain.
Vectors are denoted by boldface lowercase letters, e.g., \(\Matrix{a}\), matrices are denoted by boldface capital letters, e.g., \(\Matrix{A}\), and tensors (with order greater than \(2\)) are denoted by boldface Euler script letters, e.g., \(\Tensor{A}\). 
We use MATLAB notations throughout the paper. 
For example, the \(i\)-th entry of a vector is denoted by \(\Matrix{a}_{i}\) or \(\Matrix{a}(i)\).
The submatrix of a matrix \(\Matrix{A}\) corresponding to row and column index sets \(\tau\) and \(\sigma\) are denoted by \(\Matrix{A}_{\tau, \sigma}\) or \(\Matrix{A}(\tau, \sigma)\).
A colon is used to indicate all entries of a dimension. 
For example, the \(j\)-th column of a matrix \(\Matrix{A}\) is denoted by \(\Matrix{A}_{:, j}\) or \(\Matrix{A}(:, j)\).
The conjugate transpose of a matrix \(\Matrix{A}\) is denoted by \(\Matrix{A}^{\transpose}\).
A matrix \(\Matrix{U}\) is \textit{orthonormal} if its columns form an orthonormal set, i.e., \(\Matrix{U}^{\transpose} \Matrix{U} =\Matrix{I}\).

We make use of multi-indices on tensor structures. For instance, when
dealing with a tensor grid \(\{(x_{1; i_1}, x_{2; i_2})\}_{i_1, i_2 =
1}^n\), we use \(\Matrix{i} = (i_1, i_2)\) as the index for each point and
represent it by \(\Matrix{x}_{\Matrix{i}} = (x_{1; i_1}, x_{2; i_2})\).
The one-to-one mapping between a multi-index and its linear order is given
by \(\Matrix{i} \leftrightarrow i_1 + (i_2 - 1) n\), and the grid is also
denoted as \(\{\Matrix{x}_{\Matrix{i}}\}_{\Matrix{i} = 1}^{n^2}\).
Additionally, we denote \(|\Matrix{i}| \coloneq i_1 + i_2\) and
\(\Matrix{i}! \coloneq i_1 i_2\). These notations and their corresponding
meanings can be extended analogously to \(d\)-dimensional cases.

Our notations and terminologies of tensors are consistent with~\cite{Kolda_Bader_2009}.
The mode-\(\ell\) product~(contraction) of a tensor \(\Tensor{A} \in \real^{n_1 \times \dotsb \times n_d}\) and a matrix \(\Matrix{U} \in \real^{m \times n_\ell}\) is denoted as \(\Tensor{A} \times_\ell \Matrix{U} \in \real^{n_1 \times \dotsb \times n_{\ell - 1} \times m \times n_{\ell + 1} \times \dotsb \times n_d}\).
The contraction of two tensors \(\Tensor{A}\) and \(\Tensor{B}\) is represented by \(\Tensor{A} \times_{\Matrix{a}, \Matrix{b}} \Tensor{B}\), where \(\Matrix{a}\) and \(\Matrix{b}\) are vectors specifying the dimensions in \(\Tensor{A}\) and \(\Tensor{B}\) to be contracted.

The \textit{Tucker decomposition} of a tensor \(\Tensor{A}\) with \textit{core tensor} \(\Tensor{G} \in \real^{p_1 \times \dotsb \times p_d}\) and \textit{factored matrices} \(\{\Matrix{U}_\ell \in \real^{n_\ell \times p_\ell}\}_{\ell = 1}^d\) is defined as follows:
\begin{equation} \label{eq:Tucker}
  \Tensor{A} \coloneq \Tucker(\Tensor{G}, \{\Matrix{U}_{\ell}\}_{\ell = 1}^d) \coloneq \Tensor{G} \times_1 \Matrix{U}_1 \times_2 \dotsb \times_d \Matrix{U}_d.
\end{equation}
The vector \(\Matrix{p} = (p_1, \dotsc, p_d)\) is referred to as the \textit{Tucker rank} or simply, the \textit{rank} of \(\Tensor{A}\).
When \(p_1 = \dotsb = p_d = p\), we also say \(p\) is the rank of \(\Tensor{A}\).
In this article, we often consider \(2 d\)-order tensors where the first and last \(d\) dimensions correspond to points in two different tensor grids.
In such cases, the notation
\begin{equation*}
    \Tucker\bigl(\Tensor{G}, \{\Matrix{U}_{\ell}\}_{\ell = 1}^d, \{\Matrix{V}_{\ell}\}_{\ell = 1}^d\bigr) \coloneq \Tensor{G} \times_1 \Matrix{U}_1 \times_2 \dotsb \times_d \Matrix{U}_d \times_{d + 1} \Matrix{V}_1 \times_{d + 2} \dotsb \times_{2 d} \Matrix{V}_d
\end{equation*}
is used to represent the Tucker decomposition.

It is often advantageous to enforce the factored matrices to be orthonormal, which can be satisfied by most Tucker decomposition algorithms, such as those presented in~\cite{HOSVD, STHOSVD}.
In particular, given a Tucker computation~\eqref{eq:Tucker}, this can be achieved through a series QR factorizations on each factored matrix, followed by a modification to the core tensor.
When QR factorization with column pivoting~(QRCP) is adapted, we can further compress the Tucker rank.
Assuming that all \(p_{\ell} = p\) and \(n_{\ell} = n\), then the total complexity of the orthogonalization is \(\bigO(d p^2 n + d p^{d + 1})\).

\subsection{Hierarchical Matrices} \label{subsec:hierarchical_matrices}

The definition of \(\hierarchical\)-matrices is based on the concept of
cluster tree, a tree encoding the partition information of the grid
points.

\begin{definition}[Cluster tree]
  Let \(I\) be the index set associated with the grid points from the
  discretization of IE \eqref{eq:IE}. A tree \(\tree_{I}\) is said to be a
  \textit{cluster tree} corresponding to \(I\) if the following conditions
  hold:
  \begin{enumerate}
    \item Every node in \(\tree_{I}\) is a subset of \(I\).
    \item The root of \(\tree_{I}\) is \(I\). 
    \item Each node \(\tau \in \tree_{I}\) of \(\tree_{I}\) is not empty.
    \item For every \(\tau \in \tree_{I}\), let \(\children(\tau)\) be the set consisting of its children.
    Then, either \(\children(\tau) = \emptyset\)~(\(\tau\) is a leaf node) or it forms a partition of \(\tau\), i.e.,
    \begin{equation*}
      \tau = \bigsqcup_{\tau^{\prime} \in \children(\tau)} \tau^{\prime}.
    \end{equation*}
    Here we use \(\sqcup\) to denote the union of pairwise disjoint sets.
  \end{enumerate}
\end{definition}
Typically, each point \(\Matrix{x}_{i}\) is associated with a computational domain \(\Omega_{i}\), which can be viewed as the support of its corresponding piecewise-constant function and satisfies the non-overlapping condition \(|\Omega_{i} \cap \Omega_{j}| = 0\) for \(j \neq i\).
It can be obtained from the Voronoi cells \cite{Voronoi} associated with the grid points.
In particular, when consdiering a uniform grid, \(\Omega_{i}\) is a square domain centered at \(\Matrix{x}_{i}\).
For a indexset \(\tau\) associated with the points \(\{\Matrix{x}_{i}\}_{i \in \tau}\), we define \(\Omega_{\tau} \coloneq \cup_{i \in \tau} \Omega_{i}\).

When \(\Omega\) is a unit box \([0, 1]^d\) and the points are obtained from a uniform grid, the cluster tree can be constructed using a recursive \(2^d\) uniform partition.
In the construction of a cluster tree, a stopping criterion is typically imposed to prevent the successive partition, ensuring that the number of indices in a leaf node is neither too small nor exceeds a user-specified value, denoted as \(N_0\).
Specifically, when \(|\tau| \leq N_0\), we assign \(\tau\) be the leaf node and halt further partition on it.

\begin{example}[A cluster tree on 2D uniform grid]\label{example:cluster_tree_2D}
  Suppose \(\Omega = [0, 1]^2\) and \(I\) is associated with discretization points given by
  \begin{equation*}
    \Matrix{x}_{\Matrix{i}} = \biggl((i_1 - 1) h + \frac{h}{2}, (i_2 - 1) h + \frac{h}{2}\biggr), \ \Matrix{i} = (i_1, i_2), \ 1 \leq i_1, i_2 \leq n,
  \end{equation*}
  where \(n \in \natural_{+}\) and \(h = 1 / n\).
  Consequently, the corresponding computational domain for each point is defined as \(\Omega_{\Matrix{i}} = [(i_1 - 1)h, i_1 h] \times [(i_2 - 1)h, i_2 h]\).
  The cluster tree \(\tree_{I}\) is a quadtree and its leaf nodes are shown in Figure~\ref{fig:leaf_node_cluster_tree_2D}.
\end{example}
\begin{figure}[htbp]
    \centering
    \includegraphics[width=.3\textwidth]{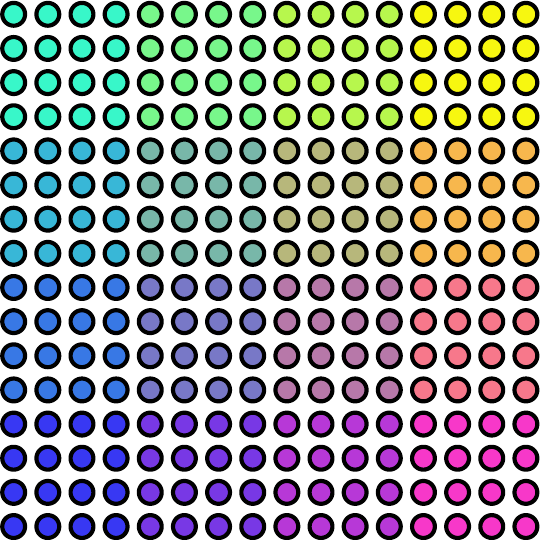}
    \caption{Leaf nodes in the cluster tree in Example~\ref{example:cluster_tree_2D}.
    Here \(n = 16\), \(N = n^2 = 256\) and \(N_0 = 16\).
    Different colors represent different leaf nodes.}
    \label{fig:leaf_node_cluster_tree_2D}
\end{figure}

The structure of \(\hierarchical\)-matrices is determined by its admissibility condition, a criterion for whether the interaction matrix of two domains can be regarded as low-rank.
There are typically two types of admissibility conditions: weak admissibility condition and strong admissibility condition.
Figure~\ref{fig:weak_strong_admissibility} demonstrates them respectively.
\begin{definition}[Weak admissibility] \label{definition:weak_admissibility}
    Two nodes \(\tau\) and \(\sigma\) in \(\tree_{I}\) are weakly admissible if \(\Omega_{\tau}\) and \(\Omega_{\sigma}\) are non-overlapping.
\end{definition}
\begin{definition}[Strong admissibility]
    \label{definition:strong_admissibility}
    Two nodes \(\tau\) and \(\sigma\) in \(\tree_{I}\) are strongly
    admissible if
    \begin{equation*}
        \max\{\diam(\Omega_{\tau}), \diam(\Omega_{\sigma})\} \leq \eta \dist(\Omega_{\tau}, \Omega_{\sigma}),
    \end{equation*}
    where \(\diam(\Omega_{\tau}) \coloneq \max_{\Matrix{x}, \Matrix{y} \in
    \Omega_{\tau}} \|\Matrix{x} - \Matrix{y}\|\) is the diameter of
    \(\Omega_{\tau}\) and \(\dist(\Omega_{\tau}, \Omega_{\sigma}) \coloneq
    \min_{\Matrix{x} \in \Omega_{\tau}, \Matrix{y} \in \Omega_{\sigma}}
    \|\Matrix{x} - \Matrix{y}\|\) is the distance between
    \(\Omega_{\tau}\) and \(\Omega_{\sigma}\). Here \(\eta > 0\) is a
    predefined hyperparameter. In this case \(\Omega_{\tau}\) and
    \(\Omega_{\sigma}\) are said to be well-separated.
\end{definition}
\begin{figure}[tbhp]
    \centering
    \begin{minipage}[c]{0.4\textwidth}
        \centering
        \includegraphics[width=.8\textwidth]{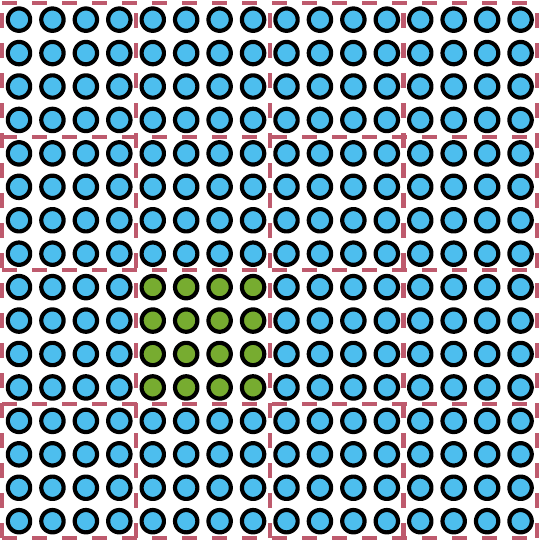}
    \end{minipage}
    \begin{minipage}[c]{0.4\textwidth}
        \centering
        \includegraphics[width=.8\textwidth]{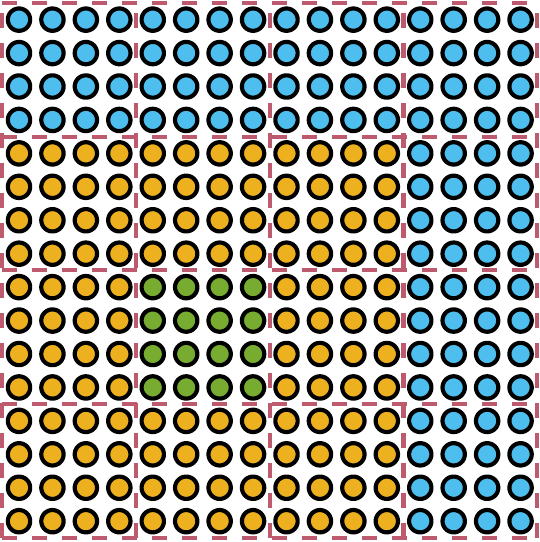}
    \end{minipage}
    \caption{Weak admissibility (left) and strong admissibility with
    parameter \(\eta = \sqrt{2}\) (right) in
    Example~\ref{example:cluster_tree_2D}. The grid points we consider is
    colored by green. The admissible grid points with respect to them are
    colored by blue while the inadmissible ones are colored by yellow. The
    dashed domain is the computational domain \(\Omega_{\tau}\) for each
    index set \(\tau\).}
    \label{fig:weak_strong_admissibility}
\end{figure}

When the kernel function in IE~\eqref{eq:IE} is smooth except the diagonal and not highly oscillatory, such as the Green's function of elliptic PDEs, the main distinction between weak and strong admissibility lies in whether the rank of the interaction matrix associated with two admissible nodes can be bounded by a constant independent of the matrix size.
Weak admissibility provides a straightforward hierarchical structure, sometimes referred to as \textit{hierarchical off-diagonal low-rank}~(HODLR) matrices~\cite{HODLR} since all off-diagonal blocks of the matrices are regarded as low-rank.
However, the rank of these low-rank matrices may increase mildly with the matrix size~\cite{Greengard_Gueyffier_Martinsson_Rokhlin_2009, RS}.
In contrast, strong admissibility ensures a numerically constant rank but results in a more complex \(\hierarchical\)-matrix structure~\cite{ intro_Hmatrix_H2matrix, Hmatrix_partII}.

Given a cluster tree \(\tree_{I}\) and an admissibility condition, the block cluster tree, defined as follows, provides a partition of \(I \times I\) and specifies which block possesses a low-rank representation.
The \(\hierarchical\)-matrix is then defined using the block cluster tree by compressing the submatrices of admissible leafs into low-rank matrices.
\begin{definition}[Block cluster tree] \label{definition:block_cluster_tree}
  Suppose \(\tree_{I}\) is a cluster tree associated with the index set \(I\).
  The \textit{block cluster tree} \(\tree_{I \times I}\), associated with \(\tree_{I}\) and an admissibility condition, is a tree whose nodes are subsets of \(I \times I\), which is constructed using the following procedure starting from the root node \((\tau, \sigma) = (I, I)\):
  \begin{itemize}
    \item If \(\tau\) and \(\sigma\) are admissible, then \((\tau, \sigma)\) is an admissible leaf node.
    \item If \(\children(\tau) = \emptyset\) or \(\children(\sigma) = \emptyset\), then \((\tau, \sigma)\) is an inadmissible leaf node.
    \item If \(\children(\tau) \neq \emptyset\) and \(\children(\sigma) \neq \emptyset\), then \((\tau, \sigma)\) is a non-leaf node with children nodes given by \((\tau^{\prime}, \sigma^{\prime}) \in \children(\tau) \times \children(\sigma)\).
  \end{itemize}
  Particularly, it can be directly verified that a block cluster tree is also a cluster tree.
\end{definition}
\begin{definition}[Hierarchical
    matrix,~\(\hierarchical\)-matrix,~\cite{Bebendorf_2008_book}]
    \label{definition:Hmatrix}
    Let \(I\) be an index set of the grid points from the discretization
    of the IE~\eqref{eq:IE} and \(\tree_{I \times I}\) be a block cluster
    tree. An \(\hierarchical\)-matrix of rank \(r\), denoted by
    \(\Matrix{A}^{\hierarchical}\), is a matrix on \(\tree_{I \times I}\)
    such that for every leaf \((\tau, \sigma) \in \tree_{I \times I}\),
    the following conditions hold:
    \begin{itemize}
        \item If \((\tau, \sigma)\) is an admissible leaf node, then
        \(\Matrix{A}^{\hierarchical}_{\tau, \sigma}\) is a low-rank matrix
        with rank at most \(r\), i.e., \(\Matrix{A}^{\hierarchical}_{\tau,
        \sigma} = \Matrix{U} \Matrix{G} \Matrix{V}^{\transpose}\) for some
        \(\Matrix{U} \in \real^{|\tau| \times r}\), \(\Matrix{V} \in
        \real^{|\sigma| \times r}\) and \(\Matrix{G} \in \real^{r \times
        r}\). 
        \item If \((\tau, \sigma)\) is an inadmissible leaf node, then
        \(\Matrix{A}^{\hierarchical}_{\tau, \sigma} \in \real^{|\tau|
        \times |\sigma|}\) is a dense matrix.
        \item If \((\tau, \sigma)\) is a non-leaf node, then
        \(\Matrix{A}^{\hierarchical}_{\tau, \sigma}\) is a block matrix
        with blocks \(\Matrix{A}^{\hierarchical}_{\tau^{\prime},
        \sigma^{\prime}}\), where \((\tau^{\prime}, \sigma^{\prime}) \in
        \children(\tau, \sigma)\).
    \end{itemize}
\end{definition}
The structures of corresponding \(\hierarchical\)-matrices under weak and
strong admissibility (with \(\eta = \sqrt{2}\)) of
Example~\ref{example:cluster_tree_2D} are shown in
Figure~\ref{fig:weak_strong_Hmatrix}.
\begin{figure}[tbhp]
    \centering
    \begin{minipage}[c]{0.4\textwidth}
        \centering
        \includegraphics[width=0.8\textwidth]{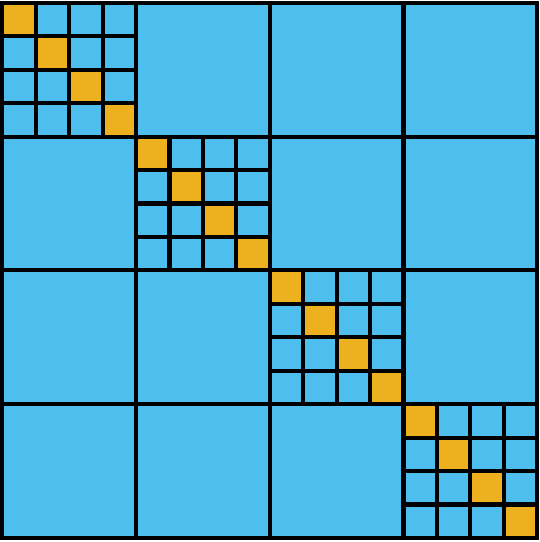}
    \end{minipage}
    \begin{minipage}[c]{0.4\textwidth}
        \centering
        \includegraphics[width=0.8\textwidth]{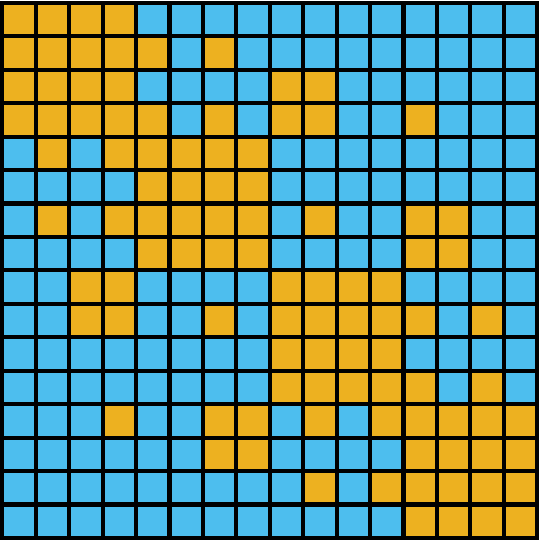}
    \end{minipage}
    \caption{\(\hierarchical\)-matrices corresponding to
    Example~\ref{example:cluster_tree_2D} under weak admissibility~(left)
    and strong admissibility with parameter \(\eta = \sqrt{2}\)~(right)
    where \(n = 16\), \(N = n^2 = 256\) and \(N_0 = 4^2\). The yellow
    submatrices are dense while the blue ones are low-rank. The weak
    admissibility implies that there is only one dense submatrix (the
    diagonal block) in each row block. On the other hand, under strong
    admissibility, each row block has at most \(9\) dense submatrices.}
    \label{fig:weak_strong_Hmatrix}
\end{figure}

Compared to the typical \(\bigO(N^2)\) storage complexity of a complete dense matrix, the storage cost of an \(\hierarchical\)-matrix is significantly lower, specifically \(\bigO(r N \log N)\)~\cite{construction_arithmetics_Hmatrix, Hmatrix_partI}.
However, as we will demonstrate in Section~\ref{subsec:tucker_decomposition_from_high_dimensional_interpolation}, for a given accuracy, the upper bound of the desired rank \(r\) in \(\hierarchical\)-matrices typically grows exponentially with respect to the dimension \(d\), taking the form \(r = p^d\).
This exponential dependence can lead to storage challgenges for moderately large \(d\).

\section{Hierarchical Tucker Low-Rank Matrices} \label{sec:hierarchical_tucker_low_rank_matrices}

When considering interaction matrices between two different tensor grids, Tucker decomposition is more appropriate for the low-rank compression.
By combing this idea with the \(\hierarchical\)-matrix framework, in this section we propose the HTLR matrix, which achieves linear storage complexity.
Additionally, we illustrate how the HTLR matrix can be effectively applied to a quasi-uniform grid.

\subsection{Tucker Decomposition from High-Dimensional Interpolation} \label{subsec:tucker_decomposition_from_high_dimensional_interpolation}

Interpolation is a widely-used technique for obtaining low-rank representations of kernel matrices~\cite{BF_algorithm, black_box_FMM}.
We first review the high-dimensional interpolation on a regular domain and then apply this method to construct the Tucker decomposition of interaction matrices between two tensor grids.

We begin with the one-dimensional case.
Suppose \(f \colon [a, b] \to \real\) is a function to be interpolated on \(p\) Chebyshev points given by
\begin{equation*}
    \xi_{t} = \frac{b - a}{2} \cos \biggl(\frac{(2 t - 1) \pi}{2 p}\biggr) + \frac{b + a}{2}, \quad 1 \leq t \leq p.
\end{equation*}
The interpolant of \(f\) is defined as \(\inter_{[a, b]}[f] (x) \coloneq
\sum_{t = 1}^p f(\xi_{t}) L_{[a, b]; t}(x)\) where
\begin{equation*}
    L_{[a, b]; t}(x) \coloneq \prod_{j = 1, \ j \neq t}^p \frac{x - \xi_{t}}{\xi_j - \xi_{t}}
\end{equation*}
is the Lagrange interpolation function on the interval \([a, b]\).

In the \(d\)-dimensional case, suppose \(f\) is a function defined on the
box \(B = \prod_{\ell = 1}^d [a_\ell, b_\ell]\), the interpolant of \(f\)
using Chebyshev tensor points is given by
\begin{equation} \label{eq:interpolation_dD}
    \inter_B[f](\Matrix{x}) \coloneq \sum_{\Matrix{t} \in [p]^d} f(\Matrix{\xi}_{\Matrix{t}}) L_{B; \Matrix{t}}(\Matrix{x}),
\end{equation}
where \(\Matrix{\xi}_{\Matrix{t}} = (\xi_{1; t_{1}}, \dotsc, \xi_{d; t_{d}})\) is the interpolation point in \(B\) indexed by the multi-index \(\Matrix{t} = (t_{1}, \dotsc, t_{d})\) and \(p\) is the number of interpolation points along each dimension.
The Lagrange interpolation function \(L_{B; \Matrix{t}}(\Matrix{x})\) is defined as the product of Lagrange interpolation functions associated the interval on each dimension, i.e., \(L_{B; \Matrix{t}}(\Matrix{x}) \coloneq \prod_{\ell = 1}^d L_{[a_{\ell}, b_{\ell}]; t_{\ell}}(x_{\ell})\).
The upper bound of the approximation error is given by the following lemma.

\begin{lemma}[Approximate error of the \(d\)-dimensional interpolation,~\cite{Hackbusch_2015_book} Lemma B.7] \label{lemma:interpolation_approximate_error_dD}
    Suppose \(f\) is a function on \(B = \prod_{\ell = 1}^d [a_\ell, b_\ell]\) which has \((p + 1)\)-th continuous derivatives, then for the Chebyshev interpolation~\eqref{eq:interpolation_dD}, we have
    \begin{equation} \label{eq:interpolation_approximate_error_dD}
        \|\inter_B[f] - f\|_{\infty, B} \leq 2 \Lambda_p^{d-1} \sum_{\ell = 1}^d \biggl(\frac{b_\ell - a_\ell}{4}\biggr)^{p + 1} \frac{\|\partial_\ell^{p + 1}f\|_{\infty, B}}{(p + 1)!},
    \end{equation}
    where \(\Lambda_p\) is the Lebesgue constant and satisfies the
    estimate \(\Lambda_p \sim 2 \log(p) /
    \pi\)~(See~\cite{Trefethen_2019_book}, Theorem 15.2).
\end{lemma}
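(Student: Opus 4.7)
The plan is to reduce the multidimensional Chebyshev interpolation error to the one-dimensional case via a telescoping decomposition over coordinates. Let $T_\ell$ denote the one-dimensional Chebyshev interpolation operator that acts on a multivariate function by interpolating in its $\ell$-th variable with the remaining variables held fixed. Because the tensor-product nodes and Lagrange basis factor, the multivariate interpolant in~\eqref{eq:interpolation_dD} satisfies $\inter_B = T_1 T_2 \cdots T_d$; since each $T_j$ acts only on coordinate $x_j$, the operators commute pairwise, and $T_j$ commutes with $\partial_\ell^{p+1}$ whenever $j \neq \ell$. Applied coordinatewise, the one-dimensional Chebyshev error on $[a_\ell, b_\ell]$ supplies the bound
\begin{equation*}
    \|(T_\ell - I) g\|_{\infty, B} \leq \frac{2}{(p + 1)!} \biggl(\frac{b_\ell - a_\ell}{4}\biggr)^{p + 1} \|\partial_\ell^{p + 1} g\|_{\infty, B},
\end{equation*}
obtained by first applying the scalar estimate pointwise in the remaining variables and then taking the outer supremum over $B$.

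First I would write down the standard telescoping identity
\begin{equation*}
    T_1 T_2 \cdots T_d - I = \sum_{\ell = 1}^d (T_\ell - I)\, T_{\ell + 1} \cdots T_d,
\end{equation*}
verified by a one-line induction on $d$ (the product is interpreted as the identity when the index range is empty), and apply it to $f$. Bounding the $\ell$-th summand then reduces, via the one-dimensional estimate above with $g = T_{\ell + 1} \cdots T_d f$, to controlling $\|\partial_\ell^{p + 1} T_{\ell + 1} \cdots T_d f\|_{\infty, B}$.

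For this amplification step I would exploit the commutation $\partial_\ell^{p + 1} T_j = T_j \partial_\ell^{p + 1}$ for $j > \ell$ to push the derivative past every factor on the right, producing $T_{\ell + 1} \cdots T_d (\partial_\ell^{p + 1} f)$. Each $T_j$ has one-dimensional sup-norm operator bound equal to the Lebesgue constant $\Lambda_p$, and this bound transfers to multivariate functions because it holds pointwise in the frozen variables. Iterating over the $d - \ell$ remaining factors and using $\Lambda_p \geq 1$ together with $\ell \geq 1$ gives amplification at most $\Lambda_p^{d - \ell} \leq \Lambda_p^{d - 1}$. Summing the $d$ resulting estimates recovers~\eqref{eq:interpolation_approximate_error_dD}.

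The principal technical obstacle I anticipate is justifying the operator-norm transfer for $T_j$ in the multivariate setting: the one-dimensional bound $|(T_j h)(x_j)| \leq \Lambda_p \max_{x_j \in [a_j, b_j]} |h(x_j)|$ must be applied with the other variables treated as parameters, and one must verify that taking the supremum over these parameters commutes cleanly with the interpolation operator without inflating the constant. Once this (essentially trivial but notationally delicate) point is pinned down, the remainder is a direct consequence of the telescoping identity and the scalar Chebyshev error bound, with no additional analytic input required.
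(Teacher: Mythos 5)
The paper offers no proof of this lemma; it is imported verbatim from Hackbusch's 2015 book (Lemma B.7), so there is no internal argument to compare against. Your proof is correct and is in fact the standard one, matching what Hackbusch does: the tensor-product interpolant factors as $T_1 T_2 \cdots T_d$, the telescoping identity $T_1 \cdots T_d - I = \sum_{\ell=1}^d (T_\ell - I)\, T_{\ell+1}\cdots T_d$ holds (verified by the induction you indicate), the scalar Chebyshev remainder bound supplies the factor $\frac{2}{(p+1)!}\bigl(\frac{b_\ell - a_\ell}{4}\bigr)^{p+1}$, and the commutation $\partial_\ell^{p+1} T_j = T_j \partial_\ell^{p+1}$ for $j \neq \ell$ together with the operator-norm bound $\|T_j\|_{\infty\to\infty} = \Lambda_p \geq 1$ amplifies by at most $\Lambda_p^{d-\ell} \leq \Lambda_p^{d-1}$. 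The "delicate point" you flag — that $\sup_B$ over the frozen variables commutes with applying $T_j$ — is indeed routine: for each fixed value of the frozen coordinates one applies the scalar bound, and the outer supremum then only increases the right-hand side. One cosmetic remark: the lemma's $(p+1)$ bookkeeping corresponds to Hackbusch's convention of $p+1$ interpolation nodes (degree $p$), whereas the paper's own definition of $\inter_{[a,b]}$ uses $p$ nodes; this off-by-one is an inconsistency in the paper's statement rather than a defect in your argument, which correctly proves the bound as written.
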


Let \(k(\Matrix{x}, \Matrix{y})\) be the kernel function and \(B_{\tau} =
\prod_{\ell = 1}^d [a_\ell, b_\ell]\) and \(B_{\sigma} = \prod_{\ell =
1}^d [c_\ell, d_\ell]\) be two disjoint boxes in \(\real^d\). By
applying~\eqref{eq:interpolation_dD} twice on \(\Matrix{x}\) and
\(\Matrix{y}\), we obtain
\begin{equation} \label{eq:kernel_function_interpolation}
    k(\Matrix{x}, \Matrix{y}) \approx \sum_{\Matrix{t} \in [p]^d} \sum_{\Matrix{s} \in [p]^d} k(\Matrix{\xi}_{\Matrix{t}}, \Matrix{\eta}_{\Matrix{s}}) L_{B_{\tau}; \Matrix{t}}(\Matrix{x}) L_{B_{\sigma}; \Matrix{s}}(\Matrix{y}).
\end{equation}
Suppose \(\{\Matrix{x}_{\Matrix{i}} = (x_{1; i_1}, \dotsc, x_{d; i_d})\} \subset B_{\tau}\) and \(\{\Matrix{y}_{\Matrix{j}} = (y_{1; j_1}, \dotsc, y_{d; j_d})\} \subset B_{\sigma}\) are two set of points, each containing \(n^d\) points.
The interaction matrix \(\Matrix{K} \in \real^{n^d \times n^d}\) of them is defined by \(\Matrix{K}(\Matrix{i}, \Matrix{j}) = k(\Matrix{x}_{\Matrix{i}}, \Matrix{y}_{\Matrix{j}})\).
Utilizing the interpolation scheme~\eqref{eq:kernel_function_interpolation}, each entry \(\Matrix{K}(\Matrix{i}, \Matrix{j})\) can be approximated by
\begin{equation} \label{eq:kernel_matrix_interpolation}
    \Matrix{K}(\Matrix{i}, \Matrix{j}) \approx \sum_{\Matrix{t} \in [p]^d} \sum_{\Matrix{s} \in [p]^d} k(\Matrix{\xi}_{\Matrix{t}}, \Matrix{\eta}_{\Matrix{s}}) L_{B_{\tau}; \Matrix{t}}(\Matrix{x}_{\Matrix{i}}) L_{B_{\sigma}; \Matrix{s}}(\Matrix{y}_{\Matrix{j}}).
\end{equation}
By exploiting the tensor form of the points as well as the Lagrange interpolation functions, we denote
\begin{equation} \label{eq:matrices_interpolation}
    \begin{gathered}
        \Matrix{U}_\ell(i_\ell, \mu_\ell) = L_{[a_\ell, b_\ell]; \mu_\ell}(x_{\ell; i_\ell}), \quad \Matrix{U}_\ell \in \real^{n \times p}, \quad 1 \leq \ell \leq d, \\
        \Matrix{V}_\ell(j_\ell, \nu_\ell) = L_{[c_\ell, d_\ell]; \nu_k}(y_{\ell; j_\ell}), \quad \Matrix{V}_\ell \in \real^{n \times p}, \quad 1 \leq \ell \leq d, \\
        \Matrix{G}(\Matrix{t}, \Matrix{s}) = k(\Matrix{\xi}_{\Matrix{t}}, \Matrix{\eta}_{\Matrix{s}}), \quad \Matrix{G} \in \real^{p^d \times p^d}.
    \end{gathered}
\end{equation}
Substituting these into~\eqref{eq:kernel_matrix_interpolation}, we can reformulate it as
\begin{equation} \label{eq:kernel_matrix_low_rank_decomposition}
    \Matrix{K} \approx \Matrix{U} \Matrix{G} \Matrix{V}^{\transpose},
\end{equation}
where \(\Matrix{U} = \Matrix{U}_d \otimes \dotsm \otimes \Matrix{U}_1 \in \real^{n^d \times p^d}\) and \(\Matrix{V} = \Matrix{V}_d \otimes \dotsm \otimes \Matrix{V}_1 \in \real^{n^d \times p^d}\).
The decomposition~\eqref{eq:kernel_matrix_low_rank_decomposition} serves as the low-rank representation of admissible leaf nodes in \(\hierarchical\)-matrices~\cite{Hmatrix_partI, Hackbusch_2015_book}, requiring a storage of \(2 n^d p^d + p^{2 d}\).
Typically, the matrices \(\Matrix{U}\) and \(\Matrix{V}\) are required to be orthonormal, which can be achieved through QR factorizations on them, along with a modification of \(\Matrix{G}\).
The overall complexity of this process is \(\bigO(p^{2 d} n^d + p^{3 d})\).
The matrices \(\Matrix{U}\) and \(\Matrix{V}\) are called \textit{basis matrices} and \(\Matrix{G}\) is called the \textit{core matrix} of the low-rank decomposition.

However, because of the inherent tensor structure of points, it is more advantageous to represent~\eqref{eq:kernel_matrix_interpolation} using a Tucker low-rank decomposition, rather than constructing \(\Matrix{U}\) and \(\Matrix{V}\) explicitly.
Let \(\Tensor{K}\) and \(\Tensor{G}\) are \(2 d\)-order tensors defined by
\begin{equation*}
    \Tensor{K}(i_1, \dotsc, i_d, j_1, \dotsc, j_d) = \Matrix{K}(\Matrix{i}, \Matrix{j}), \quad \Tensor{G}(t_{1}, \dotsc, t_{d}, s_{1}, \dotsc, s_{d}) = \Matrix{G}(\Matrix{t}, \Matrix{s}).
\end{equation*}
Using~\eqref{eq:matrices_interpolation}, we can express~\eqref{eq:kernel_matrix_interpolation} as the following Tucker decomposition:
\begin{equation} \label{eq:kernel_matrix_tucker_decomposition}
    \Tensor{K} \approx \Tucker\bigl(\Tensor{G}, \{\Matrix{U}_{\ell}\}_{\ell = 1}^d, \{\Matrix{V}_{\ell}\}_{\ell = 1}^d\bigr).
\end{equation}
Compared with~\eqref{eq:kernel_matrix_low_rank_decomposition}, the Tucker low-rank structure~\eqref{eq:kernel_matrix_tucker_decomposition} maintains the same accuracy while revealing the essense of interpolation on tensor grids.
It is also more data-efficient, requiring only \(2 d n p + p^{2 d}\) storage.
If we ignore the common cost \(p^{2 d}\) on the core part (\(\Matrix{G}\) or \(\Tensor{G}\)), the Tucker low-rank matrix~\eqref{eq:kernel_matrix_tucker_decomposition} exhibits greater effectiveness, as the rest complexity, \(2 d n p\), scales linearly with the dimension \(d\).
To meet the requirement of orthonormal factored matrices, we perform an additional orthogonalization step after the interpolation, as discussed in the end of Section \ref{subsec:notations}.
A detailed algorithm is postponed till Section \ref{subsec:HTLR_construction}.
The overall complexity is \(\bigO(d p^2 n + d p^{2 d + 1})\).
Notably, the dominant term, \(d p^2 n\), exhibits linear dependence on \(d\).

\begin{remark} \label{remark:agreement_matrix_tensor_notation}
    This paper frequently treats a matrix as a tensor, or vice versa.
    To maintain conciseness, we adopt the agreement that when dealing with two tensor grids, the interaction can be represented as either a matrix or a \(2d\)-order tensor.
    The relationship between the two is given by \(\Tensor{K}(i_1, \dotsc, i_d, j_1, \dotsc, j_d) = \Matrix{K}(\Matrix{i}, \Matrix{j})\).
    At times, we may use matrix notation and tensor notation interchangeably to simplify our illustrations, and this usage should be correctly inferred from the context.
\end{remark}

We end this section with an error estimate for the interpolation~\eqref{eq:kernel_function_interpolation}, which provides an estimate of the numerical rank of the Tucker decomposition for a specific class of kernel functions.
One important property of them is the \textit{asymptotical smoothness}.

\begin{definition}[Asympotically smoothness, \cite{Bebendorf_2008_book}]
    \label{definition:asymptotically_smoothness}
    A kernel function \(k \colon \real^d \times \real^d \to \real\) is
    said to be asymptotically smooth if there exists constants \(C_{\as},
    \gamma > 0\) depending only on \(k\) satisfying
    \begin{equation} \label{eq:asymptotically_smoothness_relative}
        |\partial_{\Matrix{x}}^{\Matrix{\alpha}} \partial_{\Matrix{y}}^{\Matrix{\beta}} k(\Matrix{x}, \Matrix{y})| \leq C_{\as} \bigl(\Matrix{\alpha} + \Matrix{\beta}\bigr)! \gamma^{|\Matrix{\alpha} + \Matrix{\beta}|} \frac{|k(\Matrix{x}, \Matrix{y})|}{\|\Matrix{x} - \Matrix{y}\|^{|\Matrix{\alpha}| + |\Matrix{\beta}|}}, \quad \forall \Matrix{x} \neq \Matrix{y}, \ \Matrix{\alpha}, \Matrix{\beta} \in \natural^d.
    \end{equation}
    where \(\partial_{\Matrix{x}}^{\Matrix{\alpha}} \coloneq
    \partial_{x_1}^{\alpha_1} \dotsm \partial_{x_d}^{\alpha_d}\) and
    \(\partial_{\Matrix{y}}^{\Matrix{\beta}} \coloneq
    \partial_{y_1}^{\beta_1} \dotsm \partial_{y_d}^{\beta_d}\) denote the
    multi-dimensional partial differential operators.
\end{definition}
For example, it could be verified that kernels \(k(\Matrix{x}, \Matrix{y})
= \|\Matrix{x} - \Matrix{y}\|^{-a}\) and \(k(\Matrix{x}, \Matrix{y}) =
\log(\|\Matrix{x} - \Matrix{y}\|)\) are asymptotically
smooth~\cite{Hackbusch_2015_book}.

\begin{theorem}[Interpolation error for asymptotically smooth kernels]\label{theorem:interpolation_error_asymptotically_smooth_kernel}
    Suppose \(k\) is an asymptotically smooth kernel defined on two boxes \(B_{\tau} = \prod_{\ell = 1}^d [a_\ell, b_\ell]\) and \(B_{\sigma} = \prod_{\ell = 1}^d [c_\ell, d_\ell]\) in \(\real^d\).
    If \(\dist(B_{\tau}, B_{\sigma}) \geq \eta \max\{\diam(B_{\tau}), \diam(B_{\sigma})\}\),
    then the approximation error \(r(\Matrix{x}, \Matrix{y})\) of the Chebyshev interpolation~\eqref{eq:kernel_function_interpolation}, defined by
    \begin{equation*}
        r(\Matrix{x}, \Matrix{y}) = \sum_{\Matrix{t}, \Matrix{s} \in [p]^d} k(\Matrix{\xi}_{\Matrix{t}}, \Matrix{\eta}_{\Matrix{s}}) L_{B_{\tau}; \Matrix{t}}(\Matrix{x}) L_{B_{\sigma}; \Matrix{s}}(\Matrix{y}) - k(\Matrix{x}, \Matrix{y}),
    \end{equation*}
    has the following estimate
    \begin{equation} \label{eq:interpolation_error_asymptotically_smooth_kernel}
        \|r(\Matrix{x}, \Matrix{y})\|_{\infty, B_{\tau} \times B_{\sigma}} \leq \frac{4 C_{\as} \gamma^{p + 1} \Lambda_p^{2 d - 1} d}{(4 \eta)^{p + 1}} \|k(\Matrix{x}, \Matrix{y})\|_{\infty, B_{\tau} \times B_{\sigma}},
    \end{equation}
    where \(C_{\as}\) and \(\gamma\) are the constants
    in~\eqref{eq:asymptotically_smoothness_relative} and \(\Lambda_p\) is
    the Lebesgue constant
    in~\eqref{eq:interpolation_approximate_error_dD}.
\end{theorem}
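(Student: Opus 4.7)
The plan is to split the bivariate interpolation error by telescoping in the two vector variables and then apply the univariate-style estimate from Lemma~2.6 (the \(d\)-dimensional Chebyshev bound) in each stage, converting derivative bounds into kernel bounds via the asymptotic smoothness hypothesis.

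First I would write \(\inter_{B_\tau \times B_\sigma}[k] = \inter^{\Matrix{y}}_{B_\sigma} \inter^{\Matrix{x}}_{B_\tau}[k]\) and telescope:
\begin{equation*}
    r(\Matrix{x},\Matrix{y}) = \inter^{\Matrix{y}}_{B_\sigma}\bigl(\inter^{\Matrix{x}}_{B_\tau}[k] - k\bigr)(\Matrix{x},\Matrix{y}) + \bigl(\inter^{\Matrix{y}}_{B_\sigma}[k] - k\bigr)(\Matrix{x},\Matrix{y}).
\end{equation*}
For the first term I would use that the tensor-product Chebyshev interpolation operator in \(d\) variables has \(L^\infty\) operator norm at most \(\Lambda_p^d\), yielding
\begin{equation*}
    \|r\|_{\infty,B_\tau\times B_\sigma} \le \Lambda_p^d\,\|\inter^{\Matrix{x}}_{B_\tau}[k] - k\|_{\infty,B_\tau\times B_\sigma} + \|\inter^{\Matrix{y}}_{B_\sigma}[k] - k\|_{\infty,B_\tau\times B_\sigma}.
\end{equation*}

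Next, I would apply Lemma~2.6 with the other variable frozen as a parameter to each of the two interpolation errors. For example, for the \(\Matrix{x}\)-interpolation error, viewing \(k(\cdot,\Matrix{y})\) as a function on \(B_\tau\),
\begin{equation*}
    \|\inter^{\Matrix{x}}_{B_\tau}[k](\cdot,\Matrix{y}) - k(\cdot,\Matrix{y})\|_{\infty,B_\tau} \le 2\Lambda_p^{d-1}\sum_{\ell=1}^d \Bigl(\frac{b_\ell - a_\ell}{4}\Bigr)^{p+1} \frac{\|\partial_{x_\ell}^{p+1} k(\cdot,\Matrix{y})\|_{\infty,B_\tau}}{(p+1)!},
\end{equation*}
and analogously for the \(\Matrix{y}\)-error using \(\diam(B_\sigma)\) and \(\partial_{y_\ell}^{p+1}k\). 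I would then invoke asymptotic smoothness \eqref{eq:asymptotically_smoothness_relative} with \(\Matrix{\alpha}=(p+1)\Matrix{e}_\ell,\;\Matrix{\beta}=0\) (and vice versa) to obtain
\begin{equation*}
    \frac{\|\partial_{x_\ell}^{p+1} k\|_{\infty,B_\tau\times B_\sigma}}{(p+1)!} \le C_{\as}\,\gamma^{p+1}\,\frac{\|k\|_{\infty,B_\tau\times B_\sigma}}{\dist(B_\tau,B_\sigma)^{p+1}}.
\end{equation*}

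The admissibility assumption \(\dist(B_\tau,B_\sigma) \ge \eta\max\{\diam(B_\tau),\diam(B_\sigma)\}\) then implies \((b_\ell - a_\ell)/\dist(B_\tau,B_\sigma) \le 1/\eta\) for every \(\ell\), and likewise for the \(\Matrix{y}\)-side. Summing over the \(d\) coordinate directions gives
\begin{equation*}
    \|\inter^{\Matrix{x}}_{B_\tau}[k] - k\|_\infty,\; \|\inter^{\Matrix{y}}_{B_\sigma}[k] - k\|_\infty \le \frac{2 C_{\as}\,\gamma^{p+1}\,\Lambda_p^{d-1}\,d}{(4\eta)^{p+1}}\,\|k\|_{\infty,B_\tau\times B_\sigma}.
\end{equation*}
Plugging these into the telescoped bound yields the prefactor \((\Lambda_p^d + 1)\cdot 2C_{\as}\gamma^{p+1}\Lambda_p^{d-1}d/(4\eta)^{p+1}\); using the trivial estimate \(\Lambda_p^d + 1 \le 2\Lambda_p^d\) (valid since \(\Lambda_p \ge 1\)) produces the stated bound~\eqref{eq:interpolation_error_asymptotically_smooth_kernel}.

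The main obstacle, and the only nontrivial bookkeeping, is tracking the correct power of the Lebesgue constant when the tensor-product interpolation operator is applied to a function that is itself an interpolant in the other variable; once one recognizes that \(\inter^{\Matrix{y}}_{B_\sigma}\) acts as a bounded operator of norm \(\Lambda_p^d\) on \(L^\infty(B_\sigma)\) uniformly in the frozen variable \(\Matrix{x}\), the rest is a clean substitution of the asymptotic-smoothness inequality and the admissibility ratio.
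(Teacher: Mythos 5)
Your proof is correct and arrives at exactly the bound claimed; however, it takes a slightly more circuitous route than the paper. The paper's one-line proof applies Lemma~\ref{lemma:interpolation_approximate_error_dD} \emph{directly in dimension $2d$}, treating $B_\tau \times B_\sigma$ as a single $2d$-dimensional box whose side lengths are $\{b_\ell - a_\ell\}$ and $\{d_\ell - c_\ell\}$: the Lemma then gives the factor $2\Lambda_p^{2d-1}$ and a sum of $2d$ terms, each of which is controlled by $C_{\as}\gamma^{p+1}/(4\eta)^{p+1}\cdot\|k\|_\infty$ via \eqref{eq:asymptotically_smoothness_relative} with $\Matrix{\alpha}=(p+1)\Matrix{e}_\ell$, $\Matrix{\beta}=0$ or $\Matrix{\alpha}=0$, $\Matrix{\beta}=(p+1)\Matrix{e}_\ell$, together with $\|\Matrix{x}-\Matrix{y}\|\ge\dist(B_\tau,B_\sigma)\ge\eta\,(b_\ell-a_\ell)$; summing the $2d$ identical bounds immediately yields $4C_{\as}\gamma^{p+1}\Lambda_p^{2d-1}d/(4\eta)^{p+1}$. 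You instead telescope the bivariate interpolation into an $\Matrix{x}$-error and a $\Matrix{y}$-error, invoke the $d$-dimensional Lemma twice, and absorb one Lebesgue factor through the $\Lambda_p^d$ operator-norm bound plus $\Lambda_p^d+1\le 2\Lambda_p^d$. Both arguments are valid and reach the same constant; yours essentially re-derives the tensorized interpolation bound rather than invoking it wholesale, which is more explicit but also more bookkeeping, and is presumably close to how Lemma~B.7 in the Hackbusch reference is itself proved. Either way, there is no gap.
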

\begin{proof}
    The desired estimate follows directly from Lemma~\ref{lemma:interpolation_approximate_error_dD} and the property outlined in~\eqref{eq:asymptotically_smoothness_relative}.
\end{proof}

From the estimate of the Lebesgue constant \(\Lambda_p\), for a fixed dimension \(d\), the growth rate of the numerator in~\eqref{eq:interpolation_error_asymptotically_smooth_kernel} with respect to \(p\) can be considered negligible compared to the denominator when \(4 \eta > \gamma\) (a condition that is usually satisfied).
Consequently, Theorem~\ref{theorem:interpolation_error_asymptotically_smooth_kernel} demonstrates that for an asymptotically smooth kernel, if \(B_{\tau}\) and \(B_{\sigma}\) are well-separated, the relative approximate error of Chebyshev interpolation decreases nearly exponentially as the interpolation order \(p\) increases.
Additionally, we deduce from~\eqref{eq:interpolation_error_asymptotically_smooth_kernel} that in this case, the numerical rank of the interaction matrix remains independent of its size.

\subsection{Hierarchical Tucker Low-Rank Matrices}
\label{subsec:hierarchical_tucker_low_rank_matrix}

Motived by the Tucker low-rank structure discussed in Section~\ref{subsec:tucker_decomposition_from_high_dimensional_interpolation}, we propose the definition for hierarchical Tucker low-rank matrices.
Since our previous discussion relies on the tensor structure of the grid, we focus on the discretization~\eqref{eq:Nystrom_IE_uni} and require the cluster tree and block cluster tree to inherit the ``tensor property'' as well.

\begin{definition}[Tensor node cluster tree] \label{definition:tensor_node_cluster_tree}
    Let \(I = \bigl\{\Matrix{i} \colon \Matrix{i} \in [n]^d\bigr\}\) be the index set associated with tensor grid points from the discretization~\eqref{eq:Nystrom_IE_uni}.
    A cluster tree \(\tree_{I}\) is called a \textit{tensor node cluster tree} if every node \(\tau \in \tree_{I}\) can be expressed in the form \(\tau = \tau_1 \times \dotsb \times \tau_d\), where each \(\tau_\ell\) is the index set corresponding to the \(\ell\)-th dimension.
\end{definition}

\begin{definition}[Tensor node block cluster tree]
    \label{definition:tensor_node_block_cluster_tree}
    Suppose \(\tree_{I}\) is a tensor node cluster tree with root index
    \(I\). The \textit{tensor node block cluster tree} is defined as the
    block cluster tree \(\tree_{I \times I}\) corresponding to
    \(\tree_{I}\) and an admissibility condition.
\end{definition}

One main property of the tensor node cluster tree is that
for every node, the corresponding points exhibit a tensor structure and its computational domain is a \(d\)-dimensional box.
Therefore, the submatrices can be approximated using Tucker low-rank decomposition.

\begin{definition}[Hierarchical Tucker low-rank matrices, HTLR matrices]
    \label{definition:HTLR}
    Suppose \(\tree_{I \times I}\) is the tensor node block cluster tree
    with root \(I\). We define a matrix \(\Matrix{A}^{\HTLR}\) as an HTLR
    matrix (on \(\tree_{I \times I}\)) of rank \(p\) if for every leaf
    \((\tau, \sigma) \in \tree_{I \times I}\), the following conditions
    hold:
    \begin{itemize}
      \item If \((\tau, \sigma)\) is an admissible leaf, then \(\Matrix{A}^{\HTLR}_{\tau, \sigma}\) is a Tucker low-rank~(TLR) matrix of rank at most \(p\), i.e., \(\Matrix{A}^{\HTLR}_{\tau, \sigma} = \Tucker\bigl(\Tensor{G}, \{\Matrix{U}_{\ell}\}_{\ell = 1}^d, \{\Matrix{V}_{\ell}\}_{\ell = 1}^d\bigr)\) for some \(\Matrix{U}_{\ell} \in \real^{|\tau_{\ell}| \times p}\), \(\Matrix{V}_{\ell} \in \real^{|\sigma_{\ell}| \times p}\) and \(\Tensor{G} \in \real^{p \times \dotsb \times p}\).
      \item If \((\tau, \sigma)\) is an inadmissible leaf, then \(\Matrix{A}^{\HTLR}_{\tau, \sigma}\) is a dense matrix.
      \item If \((\tau, \sigma)\) is a non-leaf node, then \(\Matrix{A}^{\HTLR}_{\tau, \sigma}\) is a block matrix with blocks \(\Matrix{A}^{\HTLR}_{\tau^{\prime}, \sigma^{\prime}}\), where \((\tau^{\prime}, \sigma^{\prime}) \in \children(\tau, \sigma)\).
    \end{itemize}
\end{definition}

From the relationship~\eqref{eq:kernel_matrix_low_rank_decomposition} and~\eqref{eq:kernel_matrix_tucker_decomposition}, the conventional low-rank representation of rank \(p^d\) achieves the same accuracy as the TLR representation of rank \(p\) when both are derived from high-dimensional interpolation.
Using the storage cost of TLR matrices discussed in Section~\ref{subsec:tucker_decomposition_from_high_dimensional_interpolation}, the storage complexity for an HTLR matrix is linear, which is stated in the following proposition.

\begin{proposition}[Storage of HTLR matrices under weak admissibility]
    \label{prop:storage_HTLR_weak}
    For a fixed dimension \(d \geq 2\) and \(N_0  = n_0^d\). Let \(I =
    \{\Matrix{i} \colon \Matrix{i} \in [n]^d\}\) be the index set
    corresponding to the tensor grid points from the
    discretization~\eqref{eq:Nystrom_IE_uni} and \(N = n^d\). Suppose the
    tensor node cluster tree \(\tree_{I}\) is constructed by a \(2^d\)
    partition whose every leaf node contains fewer than \(N_0\) points and
    \(\tree_{I \times I}\) is the corresponding tensor node block cluster
    tree under weak admissibility condition. If \(\Matrix{A}^{\HTLR}\) is
    an HTLR matrix of rank \(p\) and \(p \leq n_0 \leq 2 p\), then the
    storage required for \(\Matrix{A}^{\HTLR}\) is \(\bigO(p^d N)\), where
    the prefactor depends only on \(d\).\footnote{From now on, for every
    complexity estimate, the prefactor is assumed to depend only on
    \(d\).}
\end{proposition}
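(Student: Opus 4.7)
The plan is to count, level by level, the admissible and inadmissible leaves of $\tree_{I\times I}$, tally the storage cost of each, and sum. First, I would set up the tree arithmetic. Since every leaf of $\tree_{I}$ contains fewer than $N_0 = n_0^d$ indices and the partition halves each dimension, the depth is $L = \lceil\log_2(n/n_0)\rceil$, and at level $\ell\in\{0,\dotsc,L\}$ each node $\tau$ satisfies $|\tau_k| = n/2^\ell$ in every mode. Because $p \leq n_0 \leq 2p$, we have $N_0 = \Theta(p^d)$ and $2^L\asymp n/p$; these two identities will be essential when matching the dense-block storage against the target bound.

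Next, I would identify the admissible leaves. Under weak admissibility two nodes are admissible iff their domains are non-overlapping, and the $2^d$ siblings produced by the tensor-product partition have pairwise disjoint interiors. Hence at any non-leaf $(\tau,\sigma)$ of $\tree_{I\times I}$, exactly $2^d$ of its $4^d$ children, namely those with $\tau' = \sigma'$, require further recursion, while the remaining $4^d - 2^d$ off-diagonal children are admissible leaves. A straightforward induction on $\ell$ then shows that the internal nodes at level $\ell<L$ are precisely the $2^{d\ell}$ diagonal pairs, that level $\ell\in\{1,\dotsc,L\}$ carries $(4^d - 2^d)\,2^{d(\ell-1)}$ admissible leaves, and that the $2^{dL}$ diagonal pairs at level $L$ are the inadmissible leaves.

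Then I would tabulate storage. An admissible leaf at level $\ell$ stores $2d$ factor matrices of size $(n/2^\ell)\times p$ and a core tensor of size $p^{2d}$, costing $2d(n/2^\ell)p + p^{2d}$; an inadmissible leaf stores a dense $N_0\times N_0$ block of cost $N_0^2$. Summing the factor-matrix term over admissible leaves gives a geometric sum of ratio $2^{d-1}$, which together with $2^L\asymp n/p$ evaluates to $\bigO(d\,n^d/p^{d-2}) = \bigO(dN/p^{d-2})$, and this is dominated by $p^d N$ whenever $d\geq 2$. The core-tensor term sums to $\bigO(2^{dL} p^{2d}) = \bigO((n/p)^d p^{2d}) = \bigO(p^d N)$, and the dense-block term totals $2^{dL}N_0^2 = \Theta(N\cdot N_0) = \Theta(p^d N)$. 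Adding the three contributions yields the claimed $\bigO(p^d N)$ bound, with all implicit constants depending only on $d$.

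The calculation is really just bookkeeping; the only subtle points are (i) rigorously establishing the structural claim that only the diagonal pairs have descendants under weak admissibility on the tensor-product partition, and (ii) tracking where each of the two hypotheses $p\leq n_0$ and $n_0\leq 2p$ is used: the former ensures that a rank-$p$ Tucker decomposition is well defined at the deepest admissible level, while the latter prevents the dense-leaf term $N\cdot N_0$ from exceeding the target $p^d N$. Once these are in place, summing the geometric series is mechanical.
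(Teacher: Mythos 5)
Your proof is correct and takes essentially the same level-by-level counting route as the paper: identify the admissible leaves (off-diagonal sibling pairs) and inadmissible leaves (diagonal pairs at the bottom level), tabulate the per-leaf cost of factor matrices, core tensors, and dense blocks, and sum, using $p \leq n_0 \leq 2p$ to absorb $N_0$ into $p^d$. You spell out the structural claim about diagonal-pair recursion and bound the geometric sums slightly more explicitly than the paper, but the decomposition, the leaf counts $(2^d-1)2^{d\ell}$, and the final estimate agree term for term with the paper's displayed inequality.
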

\begin{proof}
    Withou loss of generality, assume \(N = n^d = 2^{d L} n_0^d\), where
    \(L\) is the maximum level in \(\tree_{I}\). For each level \(0 \leq
    \ell \leq L\), note that the matrix size corresponding to each node,
    the number of nodes, inadmissible and admissible blocks of a node are
    bounded by \(N / 2^{d \ell}\), \(2^{d \ell}\) and \(1\), \(2^d - 1\)
    respectively. Therefore, the storage of an HTLR matrix of size \(N\),
    denoted by \(S^{\HTLR}(N)\), can be computed as
    \begin{equation} \label{eq:storage_HTLR_weak}
        \begin{aligned}
            & S^{\HTLR}(N) = \sum_{\ell = 1}^L 2^{d \ell} (2^d - 1) \bigl(2 d n_0 2^{L - \ell} p + p^{2 d}\bigr) + 2^{d L} (N / 2^{d L})^2 \\
            & \quad \leq 8 d p n_0 \frac{N}{n_0^d} + p^{2 d} \frac{N}{n_0^d} + n_0^d N \leq \biggl(16 d p^{2 - d} + p^d + 2^d p^d\biggr) N = \bigO\bigl(p^d N\bigr),
        \end{aligned}
    \end{equation} 
    where the prefactor depends only on \(d\).
\end{proof}

Since the only difference between an \(\hierarchical\)-matrix and an HTLR matrix is the representation of low-rank submatrices, by a discussion analogous to that in the proof of Proposition~\ref{prop:storage_HTLR_weak}, the storage of an \(\hierarchical\)-matrix of size \(N\) with rank \(p^d\) is
\begin{equation} \label{eq:storage_HMAT_weak}
    \begin{aligned}
        S^{\hierarchical}(N) & \leq \biggl(2^{d} d^{-1} p^d \log\bigl(N / N_0\bigr) + p^d + 2^d p^d\biggr) N \\
        & = \bigO\bigl(p^d N \log_2(N) + p^d N\bigr).
    \end{aligned}
\end{equation}
Comparing~\eqref{eq:storage_HTLR_weak} and~\eqref{eq:storage_HMAT_weak}, we conclude that HTLR matrices always requires lower storage than \(\hierarchical\)-matrices.
Furthermore, examining the distinct components of the exact storage complexity expression reveals that the first term for HTLR matrices has a smaller coefficient that depends only on the dimension \(d\).
In contrast, the coefficient for \(\hierarchical\)-matrices grows exponentially with \(d\) and logarithmically with \(N\).
Therefore, HTLR matrix also exhibits better asymptotic performance.
Finally, it is remarkable that for the HTLR matrix, the main cost is the storage of Tucker core tensors, while that of the \(\hierarchical\)-matrix is the storage of basis matrices.

Similar results and dissussions can be obtained under strong admissibility.
The only difference is the prefactor hidden in the \(\bigO\) notation.
\begin{proposition}[Storage of HTLR matrices under strong admissibility] \label{prop:storage_HTLR_strong}
    Under the same condition in Proposition~\ref{prop:storage_HTLR_weak} except that \(\tree_{I \times I}\) is constructed under the strong admissibility condition with \(\eta = \sqrt{d}\).
    If \(\Matrix{A}^{\HTLR}\) is an HTLR matrix of rank \(p\), then the storage required for \(\Matrix{A}^{\HTLR}\) is \(\bigO(p^{d} N)\).
\end{proposition}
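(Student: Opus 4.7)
The plan is to mimic the level-by-level accounting used in the proof of Proposition~\ref{prop:storage_HTLR_weak}, replacing the weak-admissibility bounds on the number of admissible and inadmissible blocks per node with analogous bounds that hold under strong admissibility with $\eta = \sqrt{d}$. The essential observation is that, under strong admissibility, the number of same-level partners $\sigma$ such that $(\tau, \sigma)$ is an admissible leaf, as well as the number of near-neighbor leaf partners at the bottom level, is bounded by a constant depending only on $d$.

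First, I would establish the geometric counting. At level $\ell$ in the $2^d$-partition tree, each node $\tau$ corresponds to a box of side length $h_\ell = 1/2^\ell$ with diameter $\sqrt{d}\,h_\ell$. With $\eta = \sqrt{d}$, two same-level boxes $\tau$ and $\sigma$ satisfy the strong admissibility condition exactly when $\dist(\Omega_\tau, \Omega_\sigma) \geq h_\ell$; equivalently, the inadmissible same-level partners of $\tau$ are precisely those within its $3^d$-box stencil, a set of size at most $3^d$. Any $\sigma$ at level $\ell$ forms an admissible leaf with $\tau$ only if $\text{parent}(\sigma)$ is an inadmissible partner of $\text{parent}(\tau)$, so $\sigma$ lies among the $2^d \cdot 3^d = 6^d$ children of these parents. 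Hence each node has at most $6^d$ admissible leaf partners at any level, and at the deepest level $L$ each leaf has at most $3^d$ inadmissible leaf partners forming dense blocks of size $n_0^d \times n_0^d$.

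With these constants in hand, the storage sum becomes
\begin{equation*}
    S^{\HTLR}(N) \leq \sum_{\ell = 1}^L 2^{d \ell} \cdot 6^d \cdot \bigl(2 d n_0 2^{L - \ell} p + p^{2 d}\bigr) + 2^{d L} \cdot 3^d \cdot (N / 2^{d L})^2.
\end{equation*}
Evaluating the two geometric sums exactly as in the proof of Proposition~\ref{prop:storage_HTLR_weak}---using that for $d \geq 2$ both $\sum_{\ell = 1}^L 2^{(d-1)\ell}$ and $\sum_{\ell = 1}^L 2^{d\ell}$ are dominated by their last term up to a $d$-dependent constant, together with $2^{dL} = N/n_0^d$ and $n_0 \leq 2p$---each of the three contributions simplifies to a term of the form $C(d)\, p^d N$, giving $S^{\HTLR}(N) = \bigO(p^d N)$.

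The main obstacle is the geometric counting in the first step, specifically verifying that the choice $\eta = \sqrt{d}$ is exactly calibrated so that same-level admissibility collapses to lying outside the $3^d$-stencil; once the $6^d$ and $3^d$ bounds are in place, the rest is a routine substitution of new constants into the computation already carried out for Proposition~\ref{prop:storage_HTLR_weak}, and the only change in the final bound is that the hidden prefactor now also absorbs these powers of $3$ and $6$.
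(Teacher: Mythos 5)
Your proposal is correct and takes essentially the same approach as the paper. The paper's proof of Proposition~\ref{prop:storage_HTLR_strong} is a one-line remark stating that the per-node counts of inadmissible and admissible blocks change to \(3^d\) and \(6^d - 3^d\); you supply exactly the geometric justification (the collapse of the \(\eta = \sqrt{d}\) condition to the \(3^d\)-stencil and the \(2^d \cdot 3^d\) children argument) that underlies those constants and then substitute them into the same level-by-level sum, so the two arguments coincide, with your version being a fully fleshed-out rendering of what the paper leaves implicit.
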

\begin{proof}
    The calculations are similar to that in the proof of
    Proposition~\ref{prop:storage_HTLR_weak}, the only difference is that
    the number of inadmissible and admissible blocks of a node is bounded
    by \(3^d\) and \(6^d - 3^d\), respectively.
\end{proof}

\subsection{HTLR Matrices on the Quasi-Uniform Grid}
\label{subsec:applications_of_HTLR_matrices_on_the_quasi_uniform_grid}

The HTLR matrix proposed in Section~\ref{subsec:hierarchical_tucker_low_rank_matrix} requires a (uniform) tensor grid.
However, in certain scenarios, functions are sampled from a quasi-uniform grid instead.
In this section we show how HTLR matrices can be appied to the IE~\eqref{eq:IE} using a discretization on the quasi-uniform grid.
As with the non-uniform fast Fourier transform \cite{Barnett_Magland_Klinteberg_2019,Greengard_Lee_2004}, the key tool is the interpolation between the tensor grid and the quasi-uniform grid.

Suppose quasi-uniform grid points are given by \(\{\Matrix{x}_{i}\}_{i = 1}^{N}\) with non-overlapping computational domains \(\{\Omega_{i}^{\quasi}\}_{i = 1}^{N}\),satisfying \(\Omega = \cup_{i = 1}^{N} \Omega_{i}^{\quasi}\).
An example of this can be seen in the triangulation of \(\Omega\) from the finite element method, where each \(\Matrix{x}_{i}\) is the the center of the triangular domain, as illustrated in Figure~\ref{fig:triangulation}.
The discretization in this case is expressed as follows:
\begin{equation*}
    a(\Matrix{x}_{i}) u_{i} + \sum_{j = 1}^{N} K_{i, j} \bigl|\Omega_{j}^{\quasi}\bigr| u_j = f(\Matrix{x}_{i}), \quad i = 1, \dotsc, N,
\end{equation*}
where
\begin{equation*}
    K_{i, j} \coloneq
    \begin{cases}
        k(\Matrix{x}_{i}, \Matrix{x}_{j}), & j \neq i, \\
        \int_{\Omega_{j}} k(\Matrix{x}_{i}, \Matrix{y}) \diff \Matrix{y} \big/ \bigl|\Omega_{j}^{\quasi}\bigr|, & j = i.
    \end{cases}
\end{equation*}
This formulation can be viewed as applying the collection method using piecewise-constant basis functions \(\bigl\{\indexFunction_{\Omega_{i}^{\quasi}}(\Matrix{x})\bigr\}_i\).
\begin{figure}[htbp]
    \centering
    \includegraphics[width=.3\textwidth]{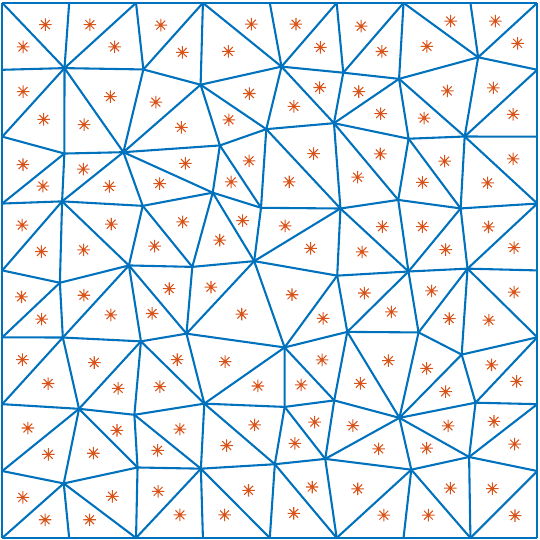}
    \caption{Triangulation of \([0, 1]^2\). The edges of each triangle are
    colored blue, and the centers are colored orange and marked with star
    ``\(*\)''. }
    \label{fig:triangulation}
\end{figure}

Throughout this section, we assume the vector \(\Matrix{u}\) in the forward computation is sampled from a smooth function \(u(\Matrix{x})\), i.e., \(u_{i} = u(\Matrix{x}_{i})\).
To utilize the HTLR matrix, a finer uniform tensor grid is prepared.
Let \(m\) denote the number of points in each direction, resulting in the uniform grid denoted by \(\{\hat{\Matrix{x}}_{t}\}_{t = 1}^{m^d}\) with computational domains \(\{\Omega_{t}^{\uni}\}_{t = 1}^{m^d}\).
Define \(M = m^d\) as the number of uniform grid points and let \(\Matrix{\hat{A}}\) represent the matrix defined by~\eqref{eq:Nystrom_IE_uni} on the uniform grid.
We introduce two interpolation matrices \(\Matrix{T}: \real^{M} \to \real^{N}\) and \(\Matrix{S}: \real^{N} \to \real^{M}\), that facilitate the relationship between the quasi-uniform grid and the uniform grid, such that the following approximation holds:
\begin{equation} \label{eq:quasi-uniform_uniform_relationship}
    \Matrix{A} \Matrix{u} \approx \Matrix{T} \Matrix{\hat{A}} \Matrix{S} \Matrix{u},
\end{equation}
Thus, the forward evaluation \(\Matrix{A} \Matrix{u}\) can be approximately computed as \(\Matrix{T} \Matrix{\hat{A}} \Matrix{S} \Matrix{u}\).
The equation~\eqref{eq:quasi-uniform_uniform_relationship} can be interpreted as the following three steps:
We first interpolate a function on the quasi-uniform grid to the uniform tensor grid by multiplying the interpolation matrix \(\Matrix{S}\).
Next, we perform the matrix-vector multiplication on the uniform tensor grid using \(\Matrix{\hat{A}}\).
Finally, we interpolate the result from the uniform grid to the quasi-uniform grid by acting \(\Matrix{T}\) on it.
Since \(\Matrix{\hat{A}}\) can be compressed into an HTLR matrix, its matrix-vector multiplication can be computed efficiently~(see Section~\ref{subsec:HTLR_matrix_vector_multiplication}).
On the other hand, as will be shown later, the matrices \(\Matrix{T}\) and \(\Matrix{S}\) are local interpolation matrix and, therefore, sparse.
By combing these observations, we conclude that the matrix-vector multiplication can be performed efficiently on the quasi-uniform grid.
If we replace \(\Matrix{\hat{A}}\) in \eqref{eq:quasi-uniform_uniform_relationship} with its HTLR approximation \(\Matrix{\hat{A}}^{\HTLR}\), this can also be interpreted as a data-sparse representation of \(\Matrix{A}\).

We discuss the choice of \(M\) and the construction of \(\Matrix{T}\) and \(\Matrix{S}\).
The selection of \(M\) is a trade-off between accuracy and efficiency:
A large \(M\) improves the accuracy but may reduce the efficiency.
Typically, \(M \approx \rho^d N\) is sufficient where \(\rho \geq 1\) is a small constant.
Relevant numerical results are presented in Section \ref{subsec:quasiuni_grid_2D}.
To construct the interpolation matrices, we adopt the perspective of the Garlarkin method.
Every vector \(\Matrix{u} \in \real^{N}\) on the quasi-uniform grid \(\{\Matrix{x}_{i}\}_{i = 1}^{N}\) is interpreted as a piecewise-constant function defined by
\begin{equation*}
    u^{\quasi}(\Matrix{x}) \coloneq \sum_{i = 1}^{N} u_{i} \indexFunction_{\Omega_{i}^{\quasi}}(\Matrix{x}).
\end{equation*}
For a given uniform tensor grid \(\{\hat{\Matrix{x}}_{t}\}_{t = 1}^M\), the task is to determine the coefficients \(\{\hat{u}_{t}\}\) such that
\begin{eqnarray}
    u^{\quasi}(\Matrix{x}) \approx u^{\uni}(\Matrix{x}) \coloneq \sum_{t = 1}^M \hat{u}_t \indexFunction_{\Omega_{t}^{\uni}}(\Matrix{x}).
\end{eqnarray}
By taking the inner product with \(\indexFunction_{\Omega_{t}^{\uni}}(\Matrix{x})\) on both sides, we obtain
\begin{equation*}
    \hat{u}_t = \sum_{i = 1}^{N} \frac{|\Omega_{t}^{\uni} \cap \Omega_{i}^{\quasi} |}{|\Omega_{t}^{\uni}|} u_{i},
\end{equation*}
which means that the interpolation matrix from quasi-uniform grid to
uniform grid is given by \(\Matrix{S}_{t, i} = | \Omega_{t}^{\uni} \cap
\Omega_{i}^{\quasi} | / |\Omega_{t}^{\uni}|\). Similarly, the the
interpolation matrix \(\Matrix{T}\) from uniform grid to quasi-uniform
grid is given by \(\Matrix{T}_{i, t} = |\Omega_{i}^{\quasi} \cap
\Omega_{t}^{\uni}| / |\Omega_{i}^{\quasi}|\). Since each region only
intersects with few neighboring regions, both \(\Matrix{T}\) and
\(\Matrix{S}\) are sparse, with the number of nonzeros approximately
\(\bigO(\max\{M, N\}) = \bigO(M)\).

\section{Construction and Application of HTLR Matrices}
\label{sec:HTLR_construction_matrix_vector_multiplication}

This section focuses on the construction and application (matrix-vector multiplication) of HTLR matrices.
Generally speaking, operations adhere follow a similar framework to those of  \(\hierarchical\)-matrices, with the exception of a distinct step related to admissible leaf nodes.
Throughout this section, we assume that \(\tree_{I \times I}\) is a given tensor node block cluster tree associated with a uniform grid.

\subsection{Construction of HTLR Matrices}
\label{subsec:HTLR_construction}

The construction of the HTLR matrix \(\Matrix{A}^{\HTLR}\) directly follows the process outlined in Definition~\ref{definition:HTLR}.
More specifically, let \((\tau, \sigma)\) be the node we are currently working on.
This can be categorized into three cases:
\begin{itemize}
    \item \((\tau, \sigma)\) is an admissible leaf node:
    In this case, computational domains \(\Omega_{\tau}\) and \(\Omega_{\sigma}\) are non-overlapping and \(\Matrix{A}_{\tau, \sigma}\) is a TLR matrix constructed from the kernel function \(k\).
    Specifically, we first construct the TLR matrix
    \(\Matrix{A}_{\tau, \sigma} = \Tucker\bigl(\Tensor{G}, \{\Matrix{U}_{\ell}\}_{\ell = 1}^d, \{\Matrix{V}_{\ell}\}_{\ell = 1}^d\bigr)\) by the \(d\)-dimensional interpolation of the kernel matrix as \eqref{eq:matrices_interpolation}, followed by a sequential QR to have each \(\Matrix{U}_{\ell}\) and \(\Matrix{V}_{\ell}\) orthonormal, then multiply the core tensor \(\Tensor{G}\) by \(h^d\) and \(\Matrix{R}\) factors from the QR factorization.
    \item \((\tau, \sigma)\) is an inadmissible leaf node:
    We direct construct the dense matrix for \(\Matrix{A}^{\HTLR}_{\tau, \sigma}\), i.e.,
    \begin{equation} \label{eq:A_t_s}
        \Matrix{A}^{\HTLR}_{\tau, \sigma} = \biggl(a(\Matrix{x}_{i}) \delta_{i, j} + K_{i, j} h^d\biggr)_{i \in \tau, j \in \sigma},
    \end{equation}
    where \(\delta_{i, j}\) is the Kronecker notation such that \(\delta_{i, j} = 1\) when \(i = j\) and \(\delta_{i, j} = 0\) when \(i \neq j\).
    \item \((\tau, \sigma)\) is a non-leaf node: The construction is performed recursively for each child \((\tau^{\prime}, \sigma^{\prime}) \in \children(\tau, \sigma)\).
\end{itemize}
The entire procedure is summarized as in Algorithm~\ref{alg:HTLR_construction}, which starts from the root \((I, I)\) of \(\tree_{I \times I}\).
\begin{algorithm}[htbp]
  \caption{\(\HConstruct\bigl(a(\Matrix{x}), k(\Matrix{x}, \Matrix{y}),
  (\tau, \sigma), p\bigr)\).\label{alg:HTLR_construction}}
  \begin{algorithmic}[1]
    \REQUIRE{Functions \(a(\Matrix{x})\) and \(k(\Matrix{x}, \Matrix{y})\) in \eqref{eq:IE}, current node \((\tau, \sigma)\), target rank \(p\)}
    \ENSURE{The HTLR matrix representation of the submatrix \(\Matrix{A}^{\HTLR}_{\tau, \sigma}\)}
    \IF{\((\tau, \sigma)\) is an admissible leaf}
    \STATE{Let \(\Omega_{\tau}\) and \(\Omega_{\sigma}\) be the associated domains and \(\{\Matrix{x}_{\Matrix{i}}\}\) and\(\{\Matrix{y}_{\Matrix{j}}\}\)} be the tensor points corresponding to \(\tau\) and \(\sigma\)
    \STATE{Compute the core tensor \(\Tensor{G}\) and factored matrices \(\{\Matrix{U}_{\ell}\}\) and \(\{\Matrix{V}_{\ell}\}\) by Chebyshev interpolation as in~\eqref{eq:matrices_interpolation}}
    \FOR{\(\ell = 1, \dotsc, d\)}
    \STATE{Compute the QR factorization \(\Matrix{U}_{\ell} = \Matrix{W}_{\ell} \Matrix{R}_{\ell}\) and \(\Matrix{V}_{\ell} = \Matrix{Q}_{\ell} \Matrix{T}_{\ell}\)}
    \STATE{Update \(\Matrix{U}_{\ell} \gets \Matrix{W}_{\ell}\) and \(\Matrix{V}_{\ell} \gets \Matrix{Q}_{\ell}\)}
    \ENDFOR
    \STATE{Update \(\Tensor{G} \gets (h^d \Tensor{G}) \times_1 \Matrix{R}_1 \times_2 \dotsb \times_d \Matrix{R}_d \times_{d + 1} \Matrix{T}_1 \times_{d + 2} \dotsb \times_{2 d} \Matrix{T}_d\)}
    \STATE{Set \(\Matrix{A}^{\HTLR}_{\tau, \sigma} = \Tucker\bigl(\Tensor{G}, \{\Matrix{U}_{\ell}\}_{\ell = 1}^d, \{\Matrix{V}_{\ell}\}_{\ell = 1}^d\bigr).\)}

    \ELSIF{\((\tau, \sigma)\) is an inadmissible leaf}
    \STATE{Form \(\Matrix{A}^{\HTLR}_{\tau, \sigma}\) by \eqref{eq:A_t_s}}
    \ELSE
    {
        \FOR{\((\tau^{\prime}, \sigma^{\prime}) \in \children((\tau, \sigma))\)}
        \STATE{\(\Matrix{A}^{\HTLR}_{\tau^{\prime}, \sigma^{\prime}} =  \HConstruct\bigl(a(\Matrix{x}), k(\Matrix{x}, \Matrix{y}),  (\tau^{\prime}, \sigma^{\prime}), p\bigr)\)}
        \ENDFOR
    }
    \ENDIF
  \end{algorithmic}
\end{algorithm}

The leading complexity in Algorithm \ref{alg:HTLR_construction} arises from the orthogonalization of factored matrices, which admits
\begin{equation*}
    \sum_{\ell = 1}^{L} \bigO\biggl(2^{d \ell} \bigl(2 d n_0 2^{L - \ell} p^2 + 2d p^{2 d + 1}\bigr)\biggr) = \bigO\bigl(p^{3 - d} N + p^{d + 1} N\bigr).
\end{equation*}
The dominant step in this process is the contraction with tensor \(\Tensor{G}\).
On the other hand, for \(\hierarchical\)-matrices, the leading complexity also stems from the orthogonalization of low-rank factors:
\begin{equation*}
    \sum_{\ell = 1}^{L} \bigO\biggl(2^{d \ell} (2^d - 1) \bigl(2^{d(L - \ell)} n_0^d p^{2 d} + p^{3 d}\bigr)\biggr) = \bigO\bigl(p^{2 d} N \log N + p^{2 d} N\bigr).
\end{equation*}
However, in this case, the dominant step is the QR factorization of basis matrices \(\Matrix{U}\) and \(\Matrix{V}\).
Therefore, the total construction complexity of HTLR matrices and \(\hierarchical\)-matrices are \(\bigO(p^{d + 1} N)\) and \(\bigO\bigl(p^{2 d} N \log_2(N)\bigr)\) respectively, indicating that the construction of HTLR matrices is more efficient.
The main difference is that, for HTLR matrices, we deal with \(d\) matrices of size \(n \times p\), while for \(\hierarchical\)-matrices we need to construct and orthogonalize a matrix of size \(n^d \times p^d\).

\subsection{Application of HTLR Matrices}
\label{subsec:HTLR_matrix_vector_multiplication}

Using the hierarchical structure,
the application, or matrix-vector multiplication, of HTLR matrices can also be computed efficiently.
Starting at \((I, I)\), let \((\tau, \sigma)\) be the current node, and we aim to update
\begin{equation} \label{eq:HTLR_mult_vector}
  \Matrix{f}_{\tau} \gets \Matrix{f}_{\tau} + \Matrix{A}^{\HTLR}_{\tau, \sigma} \Matrix{u}_{\sigma}.
\end{equation}
There are three cases depending on the type of the node \((\tau,
\sigma)\):
\begin{itemize}
    \item \((\tau, \sigma)\) is an admissible leaf:
    In this case, assume the TLR representation of \(\Matrix{A}^{\HTLR}_{\tau, \sigma}\) is denoted by \( \Matrix{A}^{\HTLR}_{\tau, \sigma} = \Tucker\bigl(\Tensor{G}, \{\Matrix{U}_{\ell}\}_{\ell = 1}^d, \{\Matrix{V}_{\ell}\}_{\ell = 1}^d\bigr)\).
    The matrix-vector multiplication is calculated via
    \begin{equation} \label{eq:TLR_mult_vector}
      \Matrix{f}_{\tau} \gets \Matrix{f}_{\tau} + (\Matrix{U}_{d} \otimes \dotsb \otimes \Matrix{U}_{1}) \Matrix{G} (\Matrix{V}_{d} \otimes \dotsb \otimes \Matrix{V}_{1})^{\transpose} \Matrix{u}_{\sigma},
    \end{equation}
    where \(\Matrix{G}\) is obatained by reshaping \(\Tensor{G}\) into a matrix (cf.\ Remark~\ref{remark:agreement_matrix_tensor_notation}).
    The product of Kronecker product of a matrix series and vector is computed as follows.
    Consider the calculation of \(\Matrix{w} = (\Matrix{V}_{d} \otimes \dotsb \otimes \Matrix{V}_{1})^{\transpose} \Matrix{u}\) where \(\Matrix{V}_{\ell} \in \real^{n_{\ell} \times p_{\ell}}\) and \(\Matrix{u} \in \real^{n_{1} \dotsb n_{d}}\).
    By reshaping \(\Matrix{u}\) to a \(d\)-dimensional tensor \(\Tensor{U}\), the tensorized result \(\Tensor{W}\) can be obtained through contractions along each dimension, expressed as
    \begin{equation*}
        \Tensor{W} = \Tensor{U} \times_1 \Matrix{V}_{1}^{\transpose} \times_2 \dotsb \times_d \Matrix{V}_{d}^{\transpose}.
    \end{equation*}
    Using these tensors, the multiplication of matrix \(\Matrix{h} = \Matrix{G} \Matrix{w}\) can be reformulated as a tensor contraction \(\Tensor{H} = \Tensor{G} \times_{[d + 1, \dotsc, 2 d], [1, \dotsc, d]} \Tensor{W}\).
    Finally, the computation of \(\Matrix{f} = (\Matrix{U}_{d} \otimes \dotsb \otimes \Matrix{U}_{1}) \Matrix{h}\) is analogous to the application of \((\Matrix{V}_{d} \otimes \dotsb \otimes \Matrix{V}_{1})^{\transpose}\).

    Suppose the matrices \(\Matrix{U}_{\ell}\) and \(\Matrix{V}_{\ell}\) are of size \(n \times p\), and \(\Matrix{G}\) are of size \(p^d \times p^d\), the computation complexity of~\eqref{eq:TLR_mult_vector} is \(\bigO\bigl(d p n^d + p^{2 d}\bigr)\).
    This is more efficient than the conventional low-rank representation in \(\hierarchical\)-matrices with \(r = p^d\), whose complexity is \(\bigO\bigl(p^{d} n^d + p^{2 d}\bigr)\).
    \item \((\tau, \sigma)\) is an inadmissible leaf node:
    For this case, the update~\eqref{eq:HTLR_mult_vector} is computed directly since \(\Matrix{A}^{\HTLR}_{\tau, \sigma}\) is a small dense matrix.
    \item \((\tau, \sigma)\) is a non-leaf node:
    The update can be reduced to each children node, i.e., we perform \(\Matrix{f}_{\tau^{\prime}} \gets \Matrix{f}_{\tau^{\prime}} + \Matrix{A}^{\HTLR}_{\tau^{\prime}, \sigma^{\prime}} \Matrix{u}_{\sigma^{\prime}}\) for each \((\tau^{\prime}, \sigma^{\prime}) \in \children(\tau, \sigma)\).
\end{itemize}

Algorithm~\ref{alg:HTLR_matrix_vector_multiplication} and
Proposition~\ref{prop:HTLR_apply_complexity} provide the pseudocode as
well as the complexity results of HTLR matrix-vector multiplication.
\begin{algorithm}[htbp]
  \caption{\(\HMultV\bigl(\Matrix{A}^{\HTLR}, (\tau, \sigma), \Matrix{u}, \Matrix{f}\bigr)\).\label{alg:HTLR_matrix_vector_multiplication}}
  \begin{algorithmic}[1]
    \REQUIRE{HTLR matrix \(\Matrix{A}^{\HTLR}\), current node \((\tau, \sigma)\), vectors \(\Matrix{u}\) and \(\Matrix{f}\).}
    \ENSURE{\(\Matrix{f}_{\tau} \gets \Matrix{f}_{\tau} + \Matrix{A}^{\HTLR}_{\tau, \sigma} \Matrix{u}_{\sigma}\).}
    \IF{\((\tau, \sigma)\) is an admissible leaf}
    \STATE{Let \(\Matrix{A}^{\HTLR}_{\tau, \sigma} = \Tucker\bigl(\Tensor{G}, \{\Matrix{U}_{\ell}\}_{\ell = 1}^d, \{\Matrix{V}_{\ell}\}_{\ell = 1}^d\bigr)\)}
    \STATE{Reshape \(\Matrix{u}_{\sigma}\) to a \(d\)-dimensional tensor \(\Tensor{U}_{\sigma}\)}
    \STATE{\(\Tensor{W}_{\sigma} = \Tensor{U}_{\sigma} \times_1 \Matrix{V}_{1}^{\transpose} \times_2 \dotsb \times_d \Matrix{V}_{d}^{\transpose}\)}
    \STATE{\(\Tensor{H}_{\tau} = \Tensor{G} \times_{[d + 1, \dotsc, 2 d], [1, \dotsc, d]} \Tensor{W}_{\sigma}\)}
    \STATE{\(\Tensor{F}_{\tau} = \Tensor{H}_{\tau} \times_1 \Matrix{U}_{1} \times_2 \dotsb \times_d \Matrix{U}_{d}\)}
    \STATE{Reshape \(\Tensor{F}_{\tau}\) to a vector and add it into \(\Matrix{f}_{\tau}\)}
    \ELSIF{\((\tau, \sigma)\) is an inadmissible leaf}
    \STATE{Compute \(\Matrix{f}_{\tau} \gets \Matrix{f}_{\tau} + \Matrix{A}^{\HTLR}_{\tau, \sigma} \Matrix{u}_{\sigma}\)}
    \ELSE{
      \FOR{\((\tau^{\prime}, \sigma^{\prime}) \in \children((\tau, \sigma))\)}
    \STATE{\(\HMultV\bigl(\Matrix{A}^{\HTLR}, (\tau^{\prime}, \sigma^{\prime}), \Matrix{u}, \Matrix{f}\bigr)\)}
      \ENDFOR
    }
    \ENDIF
  \end{algorithmic}
\end{algorithm}
\begin{proposition}[Complexity of HTLR application]
    \label{prop:HTLR_apply_complexity}
    For both weak and strong admissible conditions, the matrix-vector
    multiplication complexity for a rank-\(p\) HTLR matrix is \(\bigO(p N
    \log N + p^d N)\).
\end{proposition}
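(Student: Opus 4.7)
The plan is to follow the same level-by-level accounting used in Proposition~\ref{prop:storage_HTLR_weak}, but replacing storage cost per block by multiplication cost per block. The relevant per-block cost estimates were already derived in the discussion just before the proposition: applying a rank-\(p\) TLR admissible leaf block of size \(n_\ell^d \times n_\ell^d\) costs \(\bigO(d p\, n_\ell^d + p^{2d})\) (the two terms coming from the mode-\(\ell\) contractions with the factored matrices and from the contraction with the core tensor), while applying an inadmissible leaf block of size \(n_0^d \times n_0^d\) costs \(\bigO(n_0^{2d})\). Non-leaf nodes contribute no work beyond recursion into their children.

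Following the setup of Proposition~\ref{prop:storage_HTLR_weak}, I would write \(N = n^d = 2^{dL} n_0^d\), so that nodes at level \(\ell\) have one-dimensional size \(n_\ell = n_0 \, 2^{L-\ell}\), and there are at most \(2^{d\ell}\) nodes at level \(\ell\). Under weak admissibility each node has at most \(2^d-1\) admissible siblings; under strong admissibility with \(\eta=\sqrt{d}\) this bound becomes \(6^d - 3^d\). In both cases the number of admissible leaves at level \(\ell\) is \(\bigO(2^{d\ell})\) with a prefactor depending only on \(d\). Summing over levels, the admissible-leaf contribution is
\begin{equation*}
  \sum_{\ell=0}^{L} \bigO\!\left(2^{d\ell}\bigl(d p\, n_\ell^d + p^{2d}\bigr)\right)
  = \sum_{\ell=0}^{L} \bigO\!\left(d p\, n_0^d\, 2^{dL} + p^{2d}\, 2^{d\ell}\right)
  = \bigO\bigl(d p N \log N + p^{2d}\, N/n_0^d\bigr),
\end{equation*}
where the first term uses \(n_0^d 2^{dL}=N\) and the second sums the geometric series in \(\ell\).

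For the inadmissible leaves, each has cost \(\bigO(n_0^{2d})\) and there are \(\bigO(N/n_0^d)\) of them (at most \(3^d\) per leaf cluster under strong admissibility), giving a total of \(\bigO(n_0^d N)\). Finally, invoking the assumption \(p \leq n_0 \leq 2p\) from Proposition~\ref{prop:storage_HTLR_weak}, both \(p^{2d} N/n_0^d\) and \(n_0^d N\) collapse to \(\bigO(p^d N)\), which combines with the \(\bigO(p N \log N)\) term above to yield the stated bound. The strong-admissibility case is handled identically by replacing \(2^d-1\) with \(6^d-3^d\), affecting only the \(d\)-dependent prefactor.

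I do not expect a genuine obstacle here: the argument is a bookkeeping exercise parallel to Proposition~\ref{prop:storage_HTLR_weak}. The only subtle point worth double-checking is the per-block cost \(\bigO(dp\, n_\ell^d + p^{2d})\) for the TLR apply: the sequential mode-\(\ell\) contractions produce intermediate costs \(p n_\ell^d, p^2 n_\ell^{d-1}, \dotsc, p^d n_\ell\), whose sum is bounded by \(d p n_\ell^d\) precisely because \(p \leq n_\ell\), a condition guaranteed by \(p \leq n_0 \leq n_\ell\). Once this is verified, the rest is the same telescoping sum as in the storage proof.
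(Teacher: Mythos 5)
Your proof is correct and takes the same approach the paper uses (the paper states the proposition without a formal proof block, but the surrounding discussion in Section~\ref{subsec:HTLR_matrix_vector_multiplication} gives exactly this decomposition: the per-leaf cost $\bigO(dp\,n_\ell^d + p^{2d})$, with the factored-matrix contractions summing to $\bigO(pN\log N)$ and the core-tensor and dense-block work each summing to $\bigO(p^d N)$). Your bookkeeping — $2^{d\ell}n_\ell^d = N$ for the log-linear term, the geometric series for the core-tensor term, and the use of $p \leq n_0 \leq 2p$ to collapse both $p^{2d}N/n_0^d$ and $n_0^d N$ to $\bigO(p^d N)$ — is exactly what is needed, and your observation that $p \leq n_\ell$ is guaranteed by $p \leq n_0$ correctly justifies the $\bigO(dp\,n_\ell^d)$ bound for the sequential mode contractions.
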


For a rank-\(p^d\) \(\hierarchical\)-matrix of size \(N\), the complexity of matrix-vector multiplication of is \(\bigO(p^d N \log N + p^d N)\).
Specifically, for both HTLR matrices and \(\hierarchical\)-matrices, the computations involving dense matrices in inadmissible leaves incur the same cost, \(\bigO(p^d N)\).
Similarly, the complexity for computations associated with core matrices or tensors in admissible leaves is also \(\bigO(p^d N)\).
However, when it comes to the computation of factored or basis matrices, the complexity for HTLR matrices is reduced to \(\bigO(p N \log N)\), whereas the complexity for \(\hierarchical\)-matrices is \(\bigO(p^d N \log N)\).
From the analysis, we conclude that HTLR matrices have a significantly smaller prefactor in the leading complexity, and are more efficient than \(\hierarchical\)-matrices when applied to a vector.

\section{Numerical Results} \label{sec:numerical_results}

We apply HTLR matrices and \(\hierarchical\)-matrices to several examples to evaluate their efficiency.
Two types of kernels are considered:
\begin{itemize}
    \item Gaussian Kernel: \(k(\Matrix{x}, \Matrix{y}) =
    \exp\bigl(-\|\Matrix{x} - \Matrix{y}\|^2 / (2 \sigma^2)\bigr)\). The
    Gaussian kernel is smooth and has no singularity. In our experiments,
    the bandwidth \(\sigma\) is set to be \(\sqrt{d}\) where \(d\) is the
    dimension of the problem.
    \item Single Layer Potential (SLP) Kernel: \(k(\Matrix{x}, \Matrix{y})
    = -\log\bigl(\|\Matrix{x} - \Matrix{y}\|\bigr) / (2 \pi)\) for \(d =
    2\) and \(k(\Matrix{x}, \Matrix{y}) = \|\Matrix{x} - \Matrix{y}\| / (4
    \pi)\) for \(d = 3\). The SLP kernel has singularity on the diagonal
    and is smooth everywhere else.
\end{itemize}
All algorithms are implemented in MATLAB R2023b. The experiments are
carried out on a server with an Intel Gold 6226R CPU at 2.90 GHz and
1000.6 GB of RAM.

For each example, the following notations are adopted:
We use \(t_{\construct}\) and \(t_{\apply}\) to denote the runtime~(in seconds) for the construction and application, and \(m_{\h}\) to denote the memory cost~(in GB).
The approximated relative error of the fast matrix-vector multiplication is defined by
\begin{equation*}
    e_{\apply; \rand} = \frac{\|\Matrix{\tilde{f}}(I_r, :) - \Matrix{f}(I_r, :)\|_2}{\|\Matrix{f}(I_r, :)\|_2},
\end{equation*}
where \(\Matrix{f} = \Matrix{A} \Matrix{u}\) and \(\Matrix{\tilde{f}} = \Matrix{\tilde{A}} \Matrix{u}\) are the exact matrix-vector multiplication and the approximated result corresponding to the HTLR or \(\hierarchical\)-matrix respectively.
The index set \(I_r\) is randomly sampled from \([N]\) and contains \(|I_r| = 1000\).
Based on the results, \(e_{\apply; \rand}\) serves as a good estimate for the exact relative error.
Vector \(\Matrix{u} \in \real^{N}\) is generated randomly (Section~\ref{subsec:uniform_grid_2D} and Section~\ref{subsec:uniform_grid_3D}) or evaluated from a specific function~(Section~\ref{subsec:quasiuni_grid_2D}).
Unless specified, we set the threshold of leaf nodes and Tucker rank to \(N_0 = 16^2\) and \(p = 8\) for \(2\)D problem, and \(N_0 = 5^3\) and \(p = 4\) for \(3\)D problems.
In Section~\ref{subsec:explore_tensor_low_rank_kernel_functions}, we discuss the tensor low-rank representations of submatrices corresponding to these kernels under different cases.
Sections~\ref{subsec:uniform_grid_2D} and~\ref{subsec:uniform_grid_3D} demonstrate results on uniform grids in 2D and 3D, respectively. Finally, in Section~\ref{subsec:quasiuni_grid_2D}, we illustrate the application of HTLR matrices on quasi-uniform grids.

\subsection{Exploring Tensor Low-Rank Representations of Kernel Functions}
\label{subsec:explore_tensor_low_rank_kernel_functions}

We numerically explore the Tucker low-rankness of interaction matrices, with a comparison to conventional low-rank matrices.
Two configurations of domains for the interaction matrices are examined: Neighbor domains, which consist of two adjacent subdomains, and well-separated domains, which are characterized by two strongly admissible subdomains.
To construct a low-rank decomposition, we use the following approaches:
For both types of low-rank structures, interpolation~(INTERP) method discussed in Section~\ref{subsec:tucker_decomposition_from_high_dimensional_interpolation} is used to create a low-rank decomposition.
Besides, we apply SVD to compute conventional low-rank decomposition and the STHOSVD~\cite{STHOSVD} to compute the Tucker decomposition.
The accuracy of the compression is measured using the relative error with respect to the Frobenius norm.

In the \(2\)D case, we consider following domains: \(\Omega_1 = [0, h] \times [0, h]\), \(\Omega_2 = [h, 2 h] \times [0, h]\) and \(\Omega_3 = [2 h, 3 h] \times [0, h]\) where \(h = 0.25\).
The domains \(\Omega_1\) and \(\Omega_2\) are neighbors, while \(\Omega_1\) and \(\Omega_3\) are well-separated.
Each domain is discretized with \(32\) points in each direction.
The rank \(p\) in each direction ranges from \(1\) to \(16\), and the rank of the SVD equals to \(p^2\).
Similarly, the domains in \(3\)D are given by \(\Omega_1 = [0, h] \times [0, h] \times [0, h]\), \(\Omega_2 = [h, 2 h] \times [0, h] \times [0, h]\) and \(\Omega_3 = [2 h, 3 h] \times [0, h] \times [0, h]\) for \(h = 0.25\).
Each domain is discretized with \(16\) points in each direction, and \(p\) ranges from \(1\) to \(8\) with the rank of the SVD equals to \(p^3\).

\begin{figure}[htbp]
    \centering
    \begin{subfigure}[t]{0.23\textwidth}
        \centering
        \includegraphics[width=\linewidth]{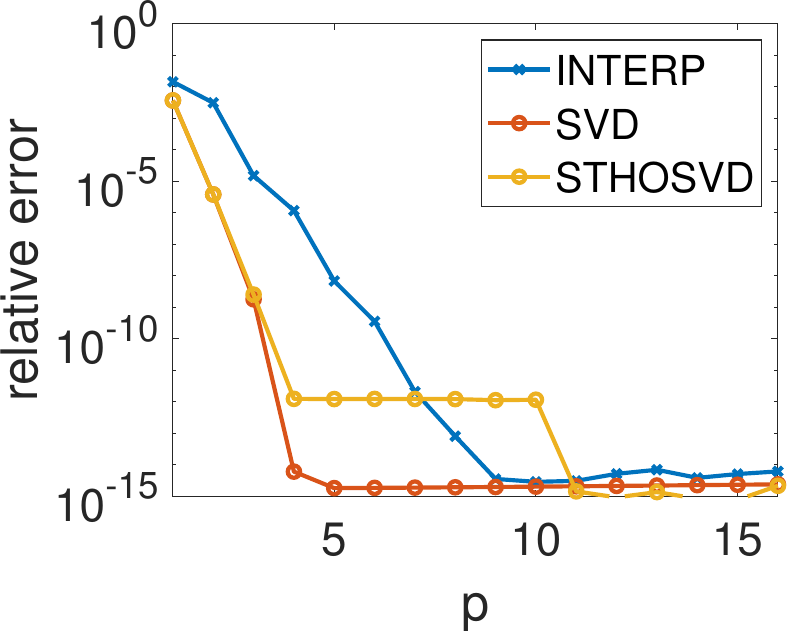}
        \includegraphics[width=\linewidth]{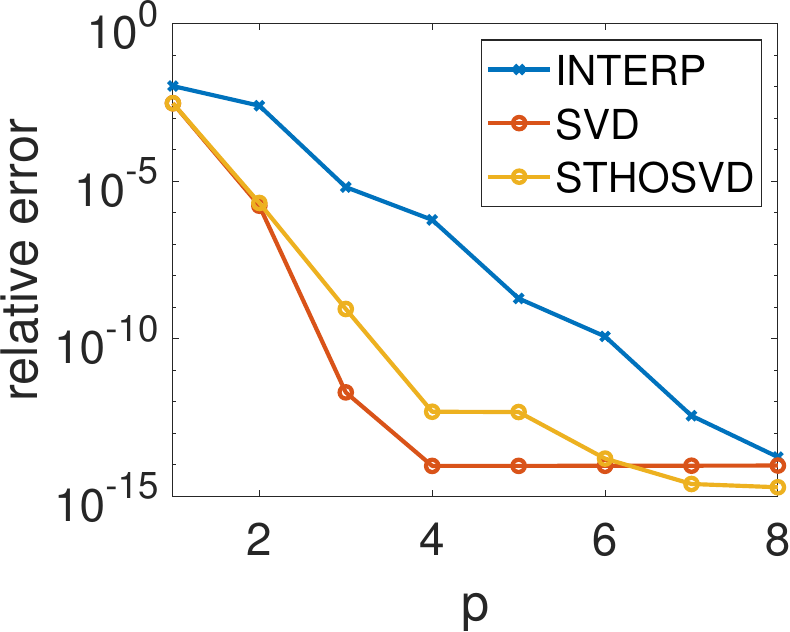}
        \caption{Gaussian kernel and neighbor domains.}
        \label{fig:NBR_Gaussian}
    \end{subfigure}
    \hspace{0.01\textwidth}
    \begin{subfigure}[t]{0.23\textwidth}
        \centering
        \includegraphics[width=\linewidth]{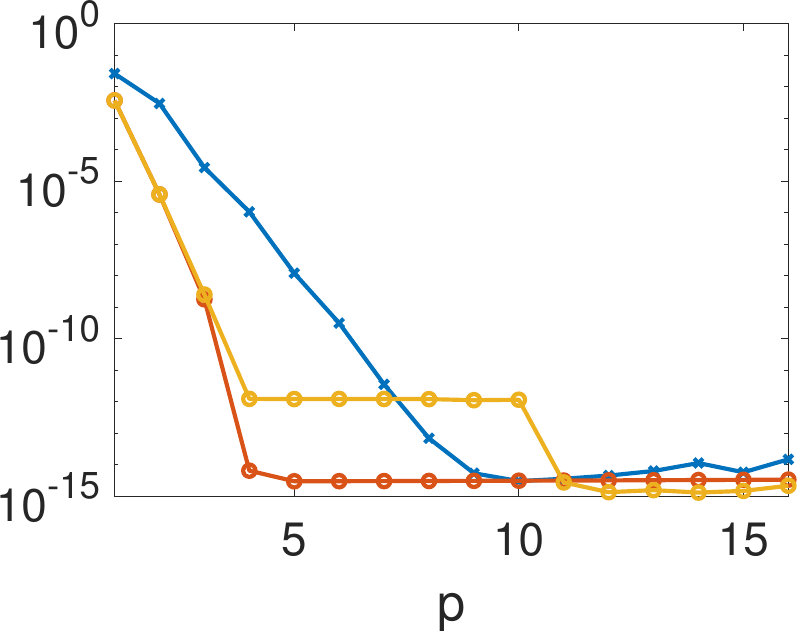}
        \includegraphics[width=\linewidth]{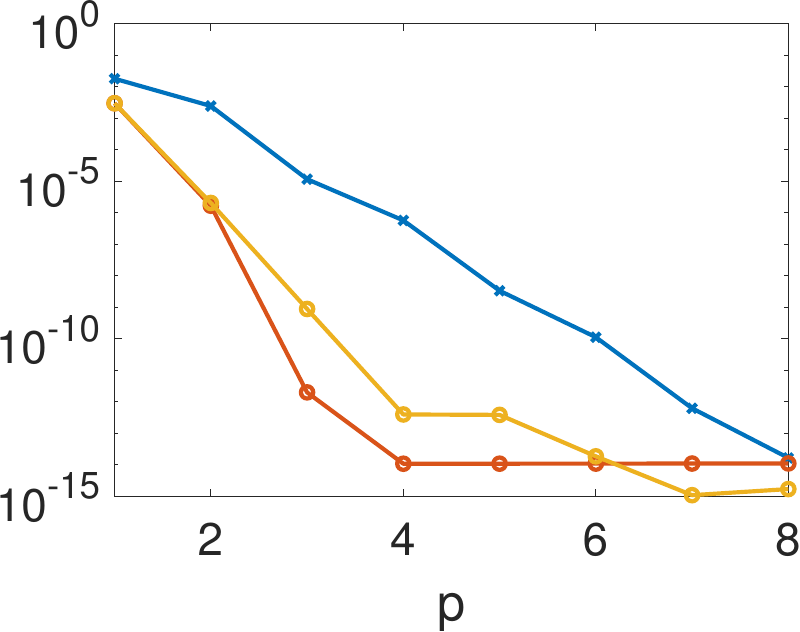}
        \caption{Gaussian kernel and well-separated domains.}
        \label{fig:WS_Gaussian}
    \end{subfigure}
    \hspace{0.01\textwidth}
    \begin{subfigure}[t]{0.23\textwidth}
        \centering
        \includegraphics[width=\linewidth]{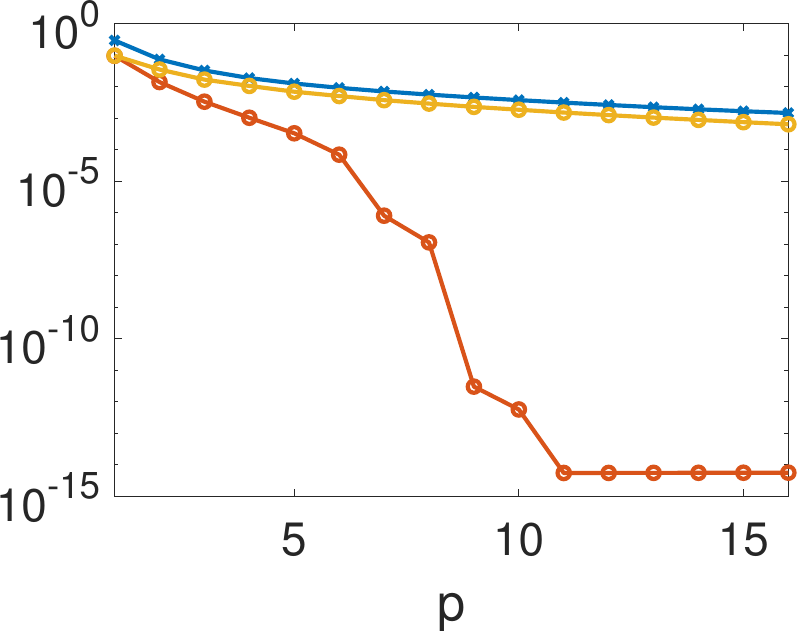}
        \includegraphics[width=\linewidth]{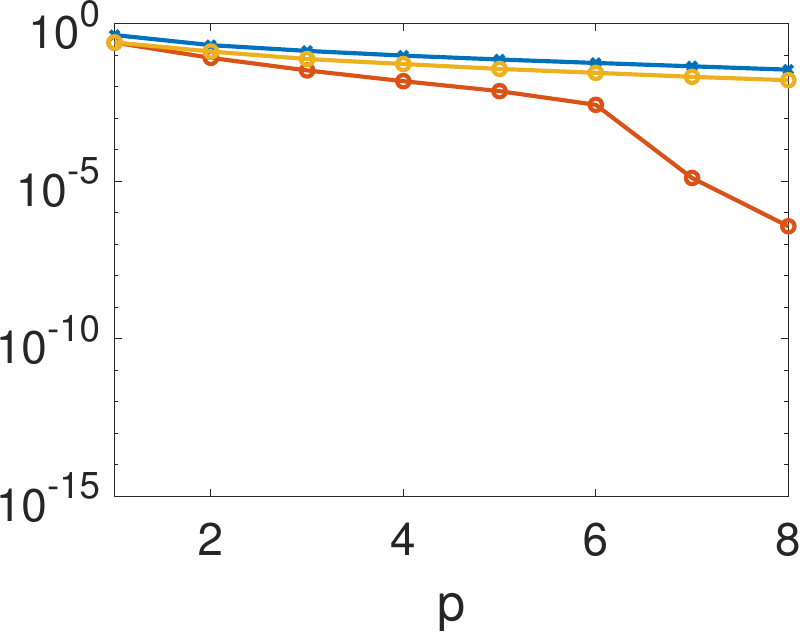}
        \caption{SLP kernel and neighbor domains.}
        \label{fig:NBR_SLP}
    \end{subfigure}
    \hspace{0.01\textwidth}
    \begin{subfigure}[t]{0.23\textwidth}
        \centering
        \includegraphics[width=\linewidth]{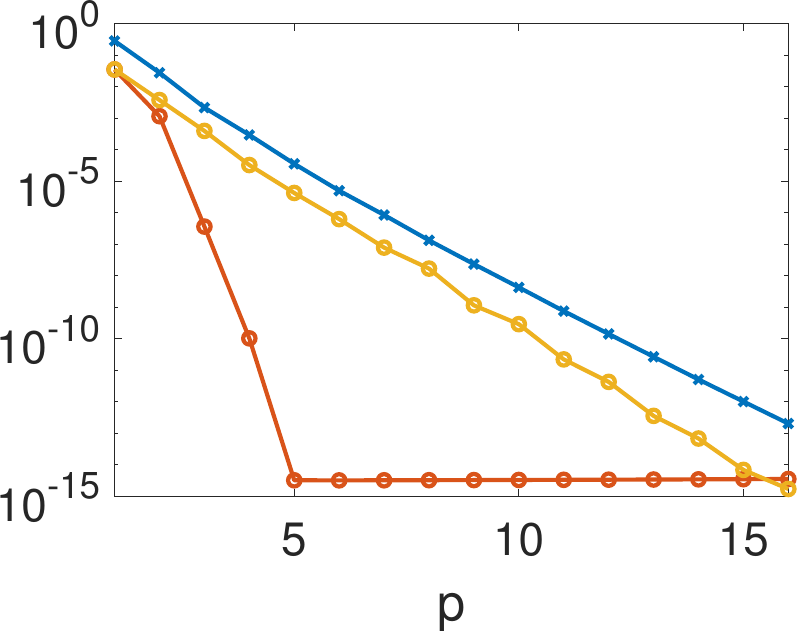}
        \includegraphics[width=\linewidth]{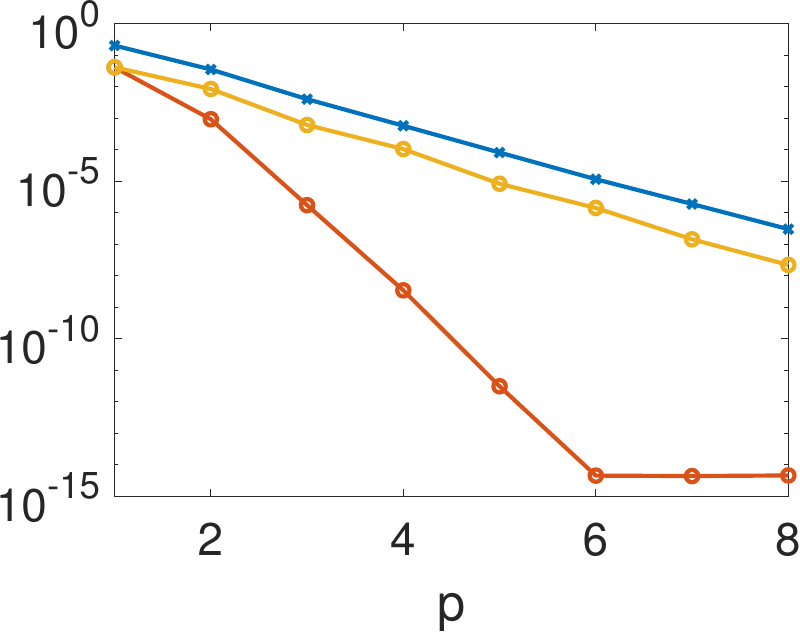}
        \caption{SLP kernel and well-separated domains.}
        \label{fig:WS_SLP}
    \end{subfigure}

    \caption{Relative approximation errors of Gaussian and SLP kernel. The
    size of the interaction matrix is \(32^2 \times 32^2\) for \(2\)D
    problems~(top) and \(16^3 \times 16^3\) for \(3\)D problems~(bottom).}
    \label{fig:relative_error_constructions_low_rank}
\end{figure}
The results of different low-rank decompositions are plotted in Figure~\ref{fig:relative_error_constructions_low_rank}.
For well-separated domains (Figure~\ref{fig:WS_Gaussian} and~\ref{fig:WS_SLP}), all three methods exhibit a rapid decay in error as \(p\) increases, regardless of the kernel.
In this scenario, the interpolation method is sufficient to achieve an accurate low-rank decomposition.
However, when two domains are neighbors~(as depicted in Figure~\ref{fig:NBR_Gaussian} and \ref{fig:NBR_SLP}), the decay rates of the relative error exhibit dependence on the kernels.
For the Gaussian kernel, both low-rank structures still work well, with the interpolation-based construction yielding satisfactory results.
In contrast, for the SLP kernel, in contrast, the Tucker low-rank format proves unsuitable.
Even with the application of STHOSVD, the relative error decreases very slowly as the rank increases.
Consequently, in the subsequent sections, we will adopt SLP kernel with strong admissibility condition and Gaussian kernel with weak admissibility condition.

\subsection{Uniform Grid in Two Dimensions}
\label{subsec:uniform_grid_2D}

This section provides experiments of HTLR matrices on \(2\)D uniform grids.
Let \(\Omega = [0, 1]^2\) be the unit square and \(a(\Matrix{x}) \equiv 0\) in~\eqref{eq:IE}.

\begin{figure}[htbp]
    \centering
    \begin{subfigure}{0.3\textwidth}
        \centering
        \includegraphics[width=\linewidth]{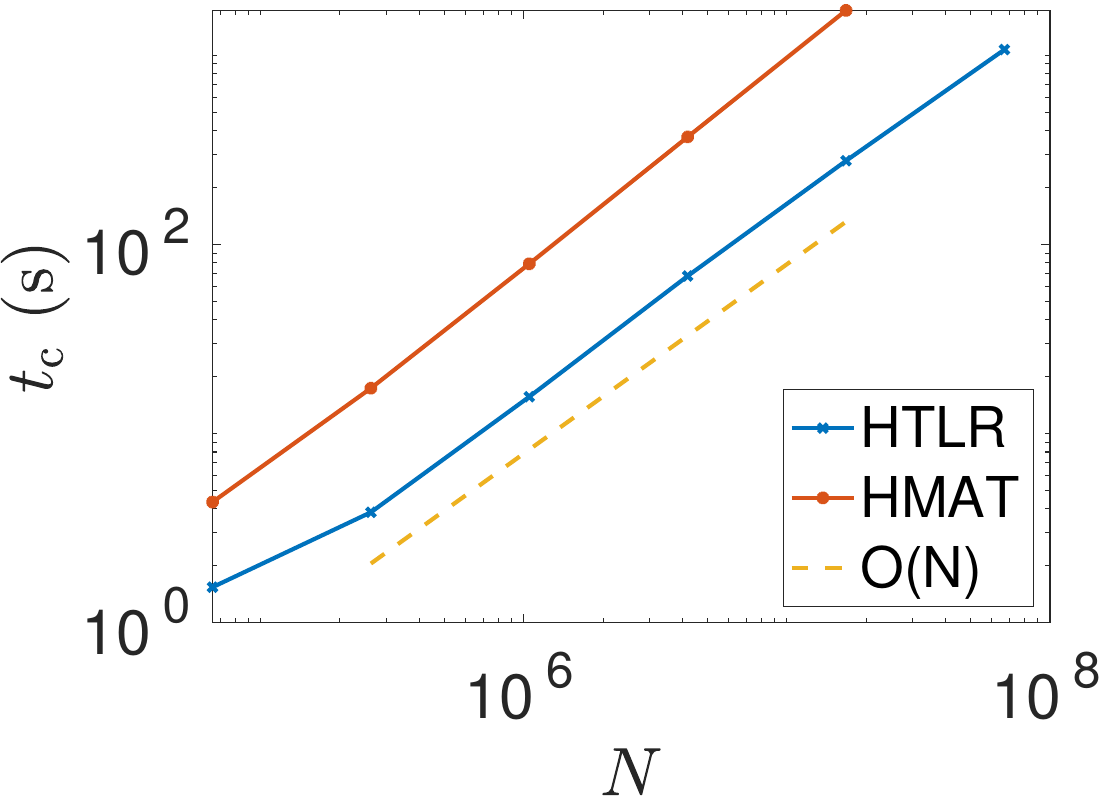}
        \caption{Construction time.}
        \label{fig:H2DW_construct_time}
    \end{subfigure}
    \hspace{0.01\textwidth}
    \begin{subfigure}{0.3\textwidth}
        \centering
        \includegraphics[width=\linewidth]{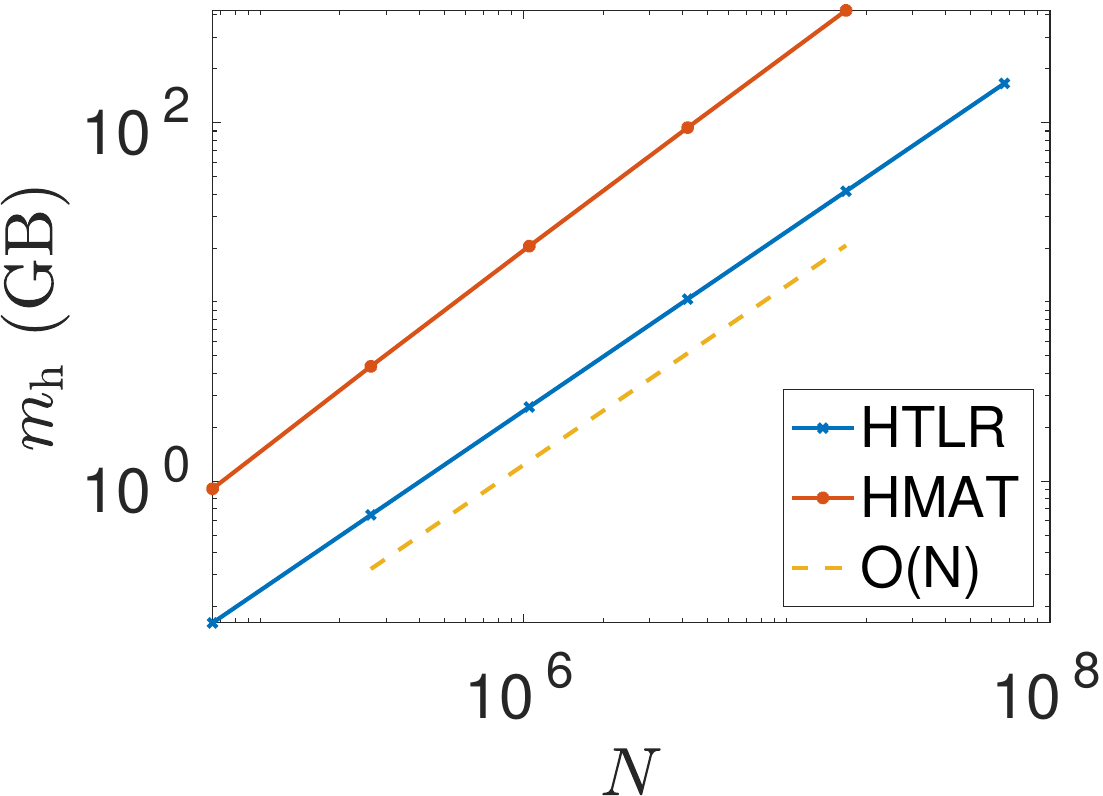}
        \caption{Memory.}
        \label{fig:H2DW_memory}
    \end{subfigure}
    \hspace{0.01\textwidth}
    \begin{subfigure}{0.3\textwidth}
        \centering
        \includegraphics[width=\linewidth]{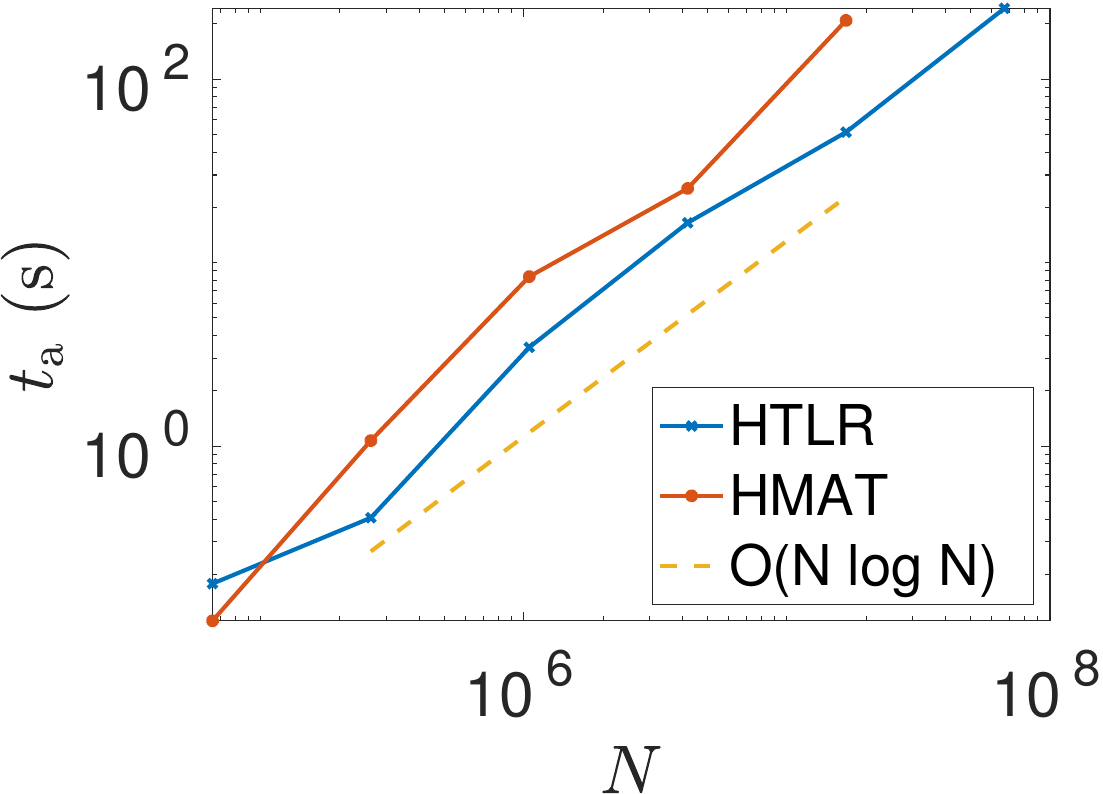}
        \caption{Application time}
        \label{fig:H2DW_hmultv_time}
    \end{subfigure}
    \hspace{0.01\textwidth}
    \caption{Time and memory costs of HTLR matrices and
    \(\hierarchical\)-matrices for the Gaussian kernel under weak
    admissibility in \(2\)D.}
    \label{fig:H2DW}
\end{figure}
We focus on the Gaussian kernel under the weak admissibility condition first.
We discretize the problem with \(n\) points in each dimension, where \(n\) ranges from \(256\) to \(8192\).
Figure~\ref{fig:H2DW} plots the construction and application time as well as the memory usage of HTLR matrices and \(\hierarchical\)-matrices.
The data point of \(\hierarchical\)-matrix of size \(8192^2\) is missing due to the memory limitation.

It is evident that both \(t_{\construct}\) and \(m_{\h}\) scales as \(\bigO(N)\), as predicted while the application time fluncates but asymptotically follows the \(\bigO(N \log N)\) complexity.
Detailed information is given in Table~\ref{tab:H2DW}.
Since the low-rank components in HTLR matrices and \(\hierarchical\)-matrices are both constructed by interpolation, they exhibit the same level of accuracy.
Therefore, we only report the error for HTLR matrices.
Compared with \(\hierarchical\)-matrices, the construction runtime of HTLR matrices is, on average, \(4\) to \(5\) times faster and the memory usage is \(8\) to \(10\) times lower.
These advantages become increasingly pronounced as the size \(N\) grows.
Notably, the error \(e_{\apply; \rand}\) remains stable as \(N\) increases.
This indicates that, when dealing with the Gaussian kernel, empolying weak admissibility with a constant rank is sufficient to achieve the desired accuracy.
\begin{table}[htbp]
    \centering
    \begin{tabular}{ccccccc}
        \toprule
        \(N\) & \(p\) & \(t_{\construct}\) (s) & speedup & \(m_{\h}\) (GB) & memory saving & \(e_{\apply; \rand}\) \\
        \midrule
        \(256^2\) & 8 & 1.5e+00 & \(2.8  \times\) & 1.6e-01 & \(5.6  \times\) & 1.3e-11 \\ 
        \midrule 
        \(512^2\) & 8 & 3.8e+00 & \(4.5  \times\) & 6.5e-01 & \(6.8  \times\) & 2.1e-11 \\ 
        \midrule 
        \(1024^2\) & 8 & 1.6e+01 & \(5.0  \times\) & 2.6e+00 & \(7.9  \times\) & 1.5e-10 \\ 
        \midrule 
        \(2048^2\) & 8 & 6.8e+01 & \(5.4  \times\) & 1.0e+01 & \(9.1  \times\) & 2.0e-11 \\ 
        \midrule 
        \(4096^2\) & 8 & 2.8e+02 & \(6.2  \times\) & 4.1e+01 & \(10.2  \times\) & 1.8e-11 \\ 
        \midrule 
        \(8192^2\) & 8 & 1.1e+03 & - & 1.7e+02 & - & 2.3e-11 \\ 
        \bottomrule
    \end{tabular}
    \caption{Numerical results of HTLR matrices for the Gaussian kernel
    under weak admissibility in \(2\)D.} \label{tab:H2DW}
\end{table}

\begin{figure}[htbp]
    \centering
    \begin{subfigure}{0.3\textwidth}
        \centering
        \includegraphics[width=\linewidth]{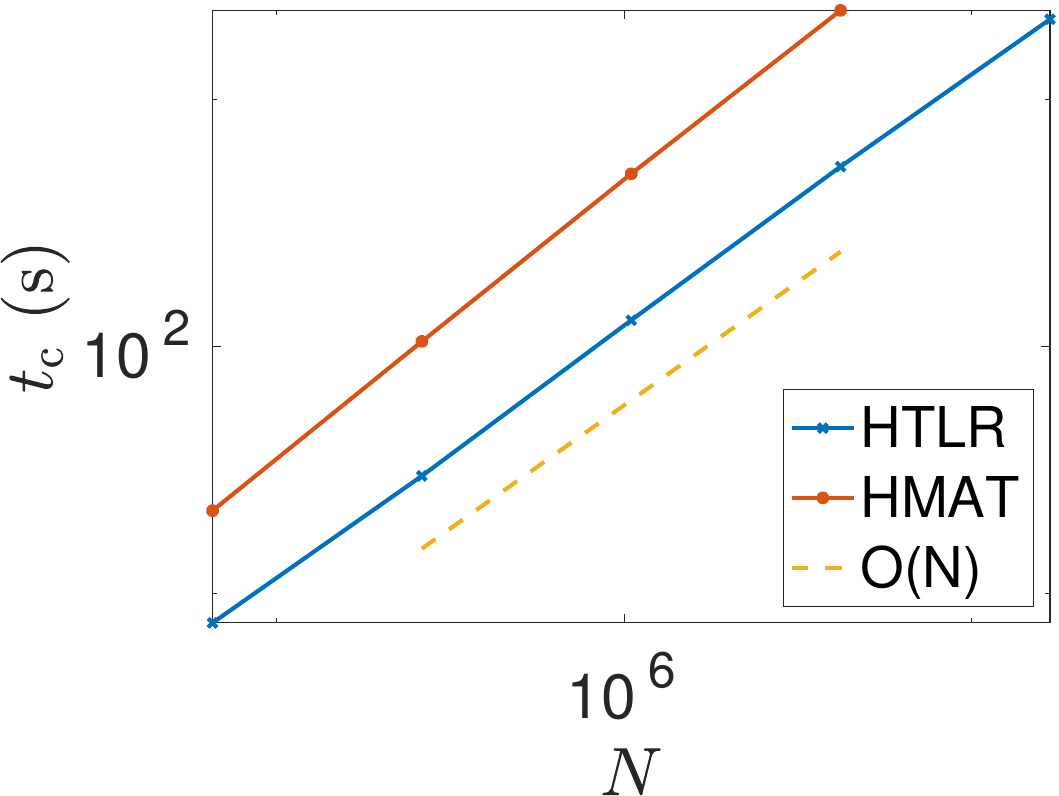}
        \caption{Construction time.}
        \label{fig:H2DS_construct_time}
    \end{subfigure}
    \hspace{0.01\textwidth}
    \begin{subfigure}{0.3\textwidth}
        \centering
        \includegraphics[width=\linewidth]{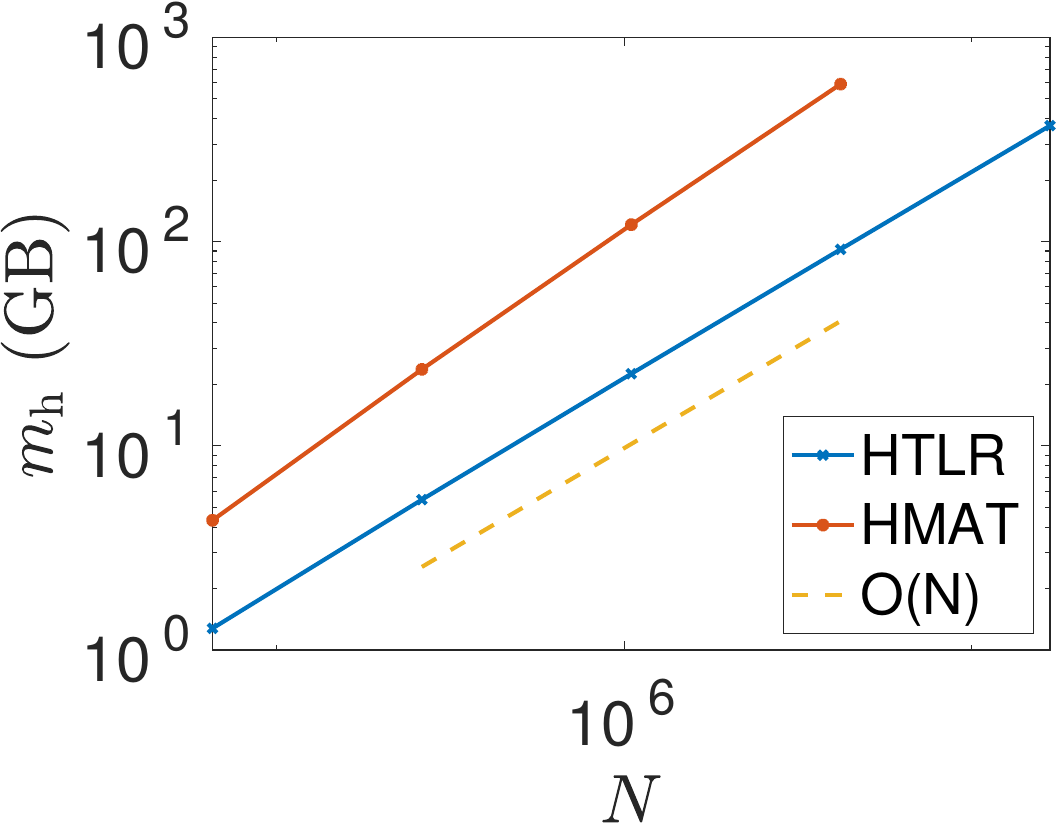}
        \caption{Memory.}
        \label{fig:H2DS_memory}
    \end{subfigure}
    \hspace{0.01\textwidth}
    \begin{subfigure}{0.3\textwidth}
        \centering
        \includegraphics[width=\linewidth]{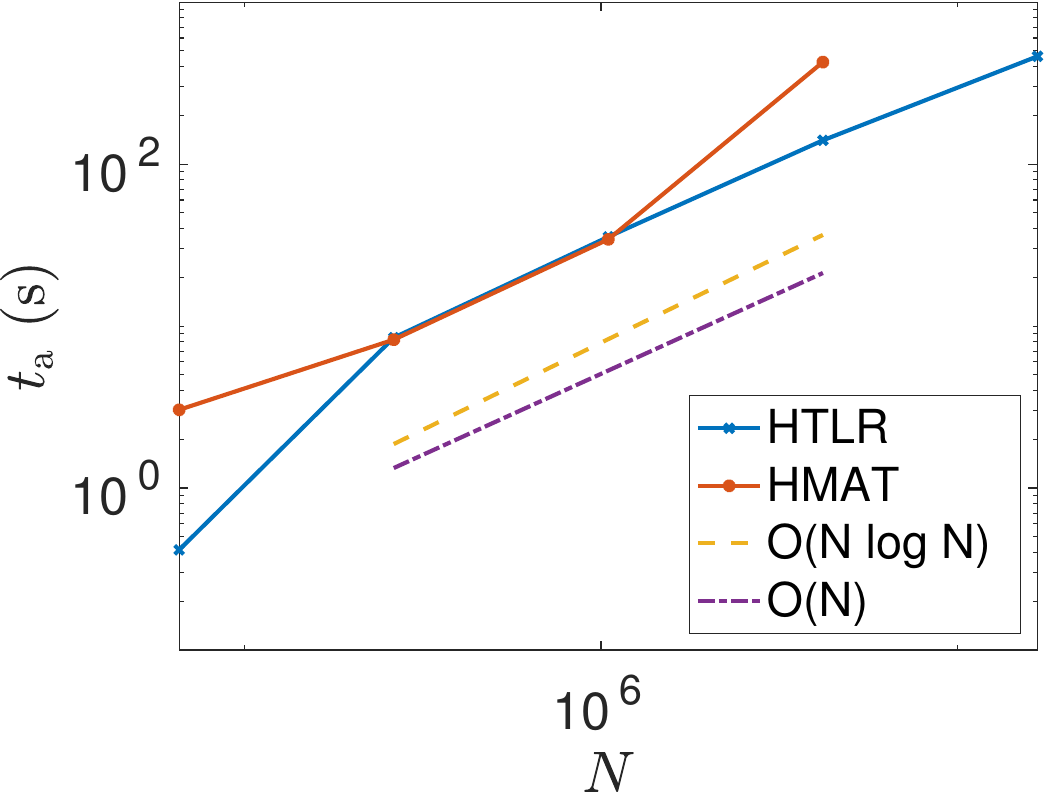}
        \caption{Application time}
        \label{fig:H2DS_hmultv_time}
    \end{subfigure}
    \hspace{0.01\textwidth}
    \caption{Time and memory costs of HTLR matrices and
    \(\hierarchical\)-matrices for the SLP kernel under strong
    admissibility in \(2\)D.}
    \label{fig:H2DS}
\end{figure}
Next we consider the SLP kernel under the strong admissibility condition.
In this scenario, \(n\) ranges from \(256\) to \(4096\).
Figure~\ref{fig:H2DS} illustrates the comparison between HTLR matrices and \(\hierarchical\)-matrices.
The data point of \(\hierarchical\)-matrix of size \(4096^2\) is absent due to the memory limit.

As anticipated, the complexities of  \(\bigO(N)\) for construction and storage are strictly followed.
However, the application time exhibits a \(\bigO(N)\) complexity instead of the expected \(\bigO(N \log N)\).
As discussed following Proposition~\ref{prop:HTLR_apply_complexity}, this discrepancy may arise because the prefactor associated with the \(\bigO(N \log N)\) term is smaller than that of the \(\bigO(N)\) term.
Consequently, when the size of the matrix is not sufficiently large, the \(\bigO(N \log N)\) term does not dominate the complexity estimate.
Additionally, the application time does not follow the anticipated complexity initially. 
This behavior occurs because, for smaller matrix sizes, the number of admissible blocks is relatively small compared to the matrix size, resulting in a runtime that scales between \(\bigO(N)\) and \(\bigO(N^2)\).

Table~\ref{tab:H2DS} presents the results.
The construction runtime of HTLR matrices is \(3\) to \(4\) times faster and the memory usage is \(4\) to \(5\) times lower.
However, the advantage here is not as obvious as it is under the weak admissibility condition, mainly because of the increase in the number of dense blocks.
Moreover, the relative error does not exhibit significant variation as the matrix size \(N\) increases, which is consistent with Theorem~\ref{theorem:interpolation_error_asymptotically_smooth_kernel}.
Therefore, strong admissible HTLR matrices prove to be more applicable in this case, as they demonstrate superior performance in both time efficiency and memory usage.
\begin{table}[htbp]
    \centering
    \begin{tabular}{ccccccc}
        \toprule
        \(N\) & \(p\) & \(t_{\construct}\) (s) & speedup & \(m_{\h}\) (GB) & memory saving & \(e_{\apply; \rand}\) \\
        \midrule
        \(256^2\) & 8 & 7.6e+00 & \(2.9  \times\) & 1.3e+00 & \(3.4  \times\) & 1.1e-07 \\ 
        \midrule 
        \(512^2\) & 8 & 3.0e+01 & \(3.5  \times\) & 5.4e+00 & \(4.3  \times\) & 8.1e-08 \\ 
        \midrule 
        \(1024^2\) & 8 & 1.3e+02 & \(3.9  \times\) & 2.3e+01 & \(5.4  \times\) & 1.2e-07 \\ 
        \midrule 
        \(2048^2\) & 8 & 5.4e+02 & \(4.3  \times\) & 9.2e+01 & \(6.4  \times\) & 5.3e-08 \\ 
        \midrule 
        \(4096^2\) & 8 & 2.1e+03 & - & 3.7e+02 & - & 4.4e-08 \\ 
        \bottomrule
    \end{tabular}
    \caption{Numerical results of HTLR matrices for the SLP kernel under
    strong admissibility in \(2\)D. \label{tab:H2DS}}
\end{table}

\subsection{Uniform Grid in Three Dimensions} \label{subsec:uniform_grid_3D}

The example in this section is the \(3\)D analogue of
Section~\ref{subsec:uniform_grid_2D}, where \(\Omega = [0, 1]^3\) with
\(a(\Matrix{x}) \equiv 0\).

\begin{figure}[htbp]
    \centering
    \begin{subfigure}{0.3\textwidth}
        \centering
        \includegraphics[width=\linewidth]{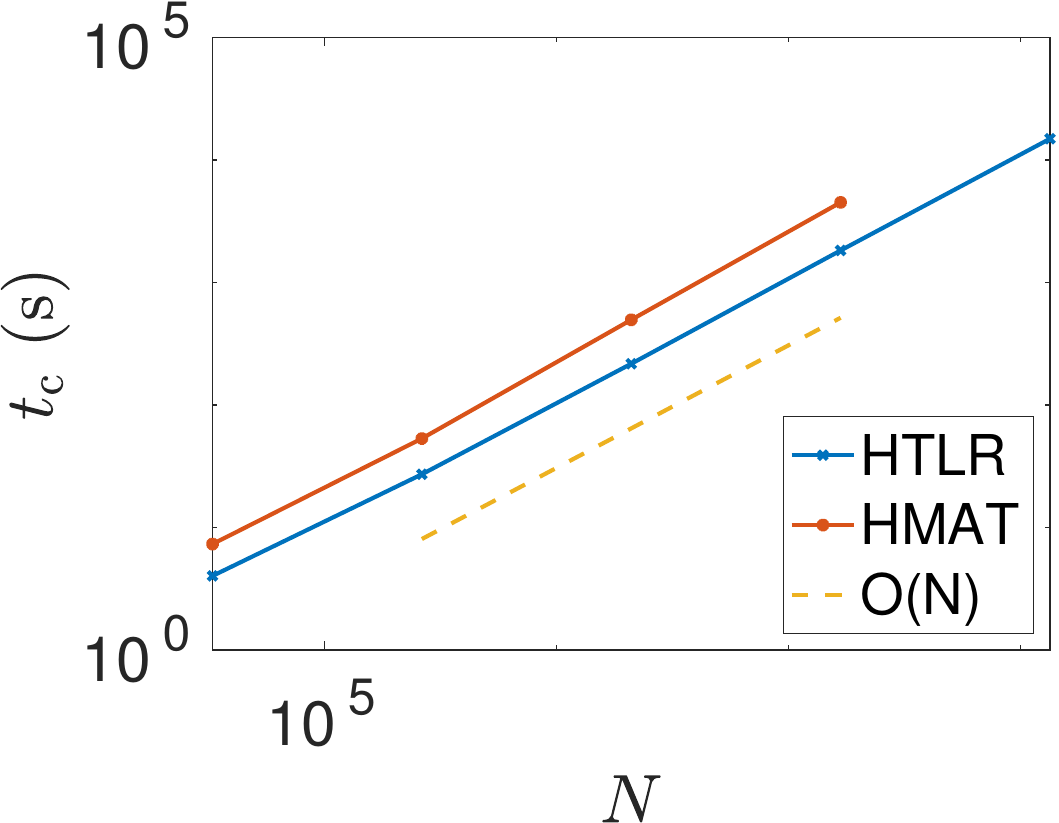}
        \caption{Construction time.}
        \label{fig:H3DW_construct_time}
    \end{subfigure}
    \hspace{0.01\textwidth}
    \begin{subfigure}{0.3\textwidth}
        \centering
        \includegraphics[width=\linewidth]{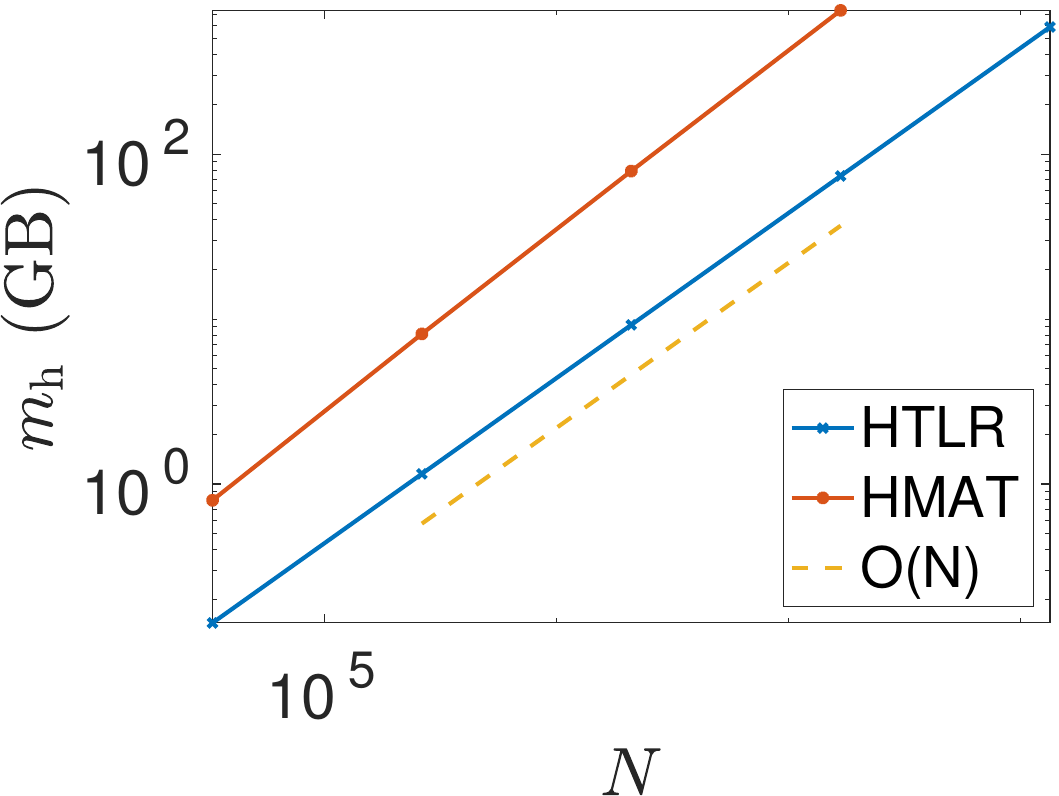}
        \caption{Memory.}
        \label{fig:H3DW_memory}
    \end{subfigure}
    \hspace{0.01\textwidth}
    \begin{subfigure}{0.3\textwidth}
        \centering
        \includegraphics[width=\linewidth]{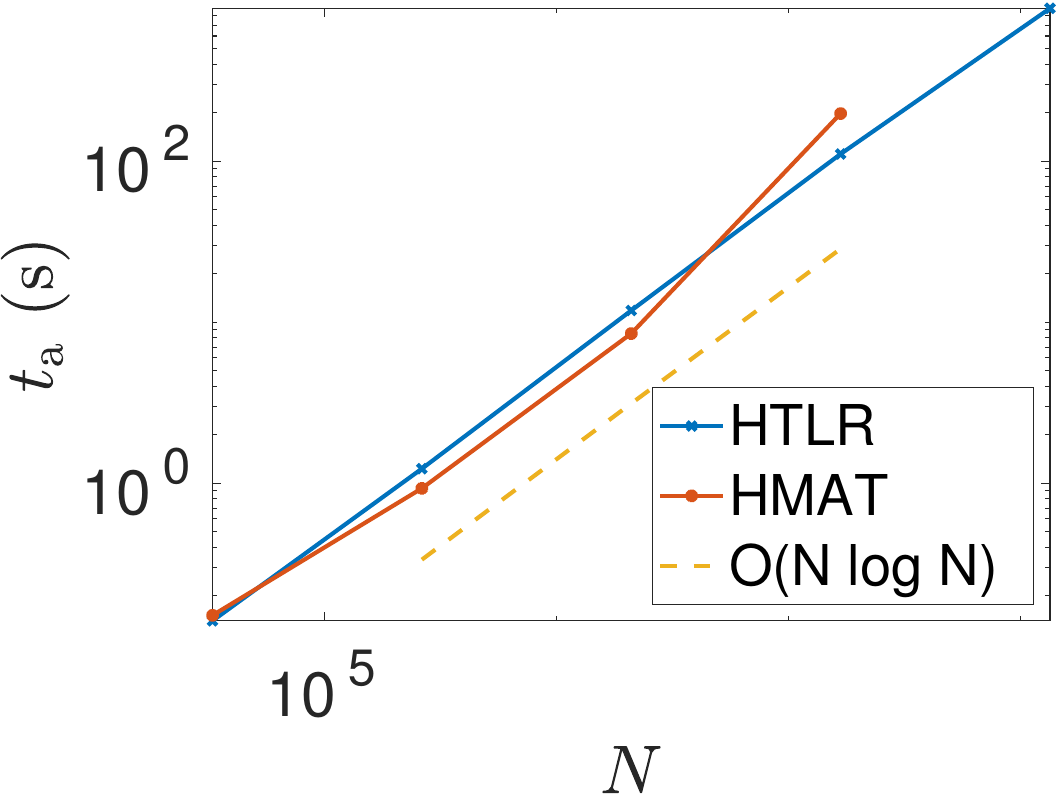}
        \caption{Application time}
        \label{fig:H3DW_hmultv_time}
    \end{subfigure}
    \hspace{0.01\textwidth}
    \caption{Time and memory costs of HTLR matrices and
    \(\hierarchical\)-matrices for the Gaussian kernel under weak
    admissibility in \(3\)D.}
    \label{fig:H3DW}
\end{figure}
When the kernel function is the Gaussian, \(n\) ranges from \(32\) to \(512\). 
Figure~\ref{fig:H3DW} plots corresponding results and detalied data can be found in Table~\ref{tab:H3DW_err}.
The data point of \(\hierarchical\)-matrix of size \(512^3\) is missing due to the memory limitation.

It is straightforward that the \(\bigO(N)\) complexity for construction and memory, as well as the \(\bigO(N \log N)\) complexity for application, are nearly strictly upheld.
Similar to the \(2\)D case, the proposed HTLR matrices demonstrate greater efficiency than \(\hierarchical\)-matrices, achieving nearly double the speed in construction and offering a memory savings of \(7\) to \(10\) times.
The memory cost of a HTLR matrix of size \(512^3\) is nearly \(590\) GB, which is still less than that of a \(\hierarchical\)-matrix of size \(256^3\)~(\(744\) GB).
However, as illustrated in Figure~\ref{fig:H3DW_hmultv_time}, the advantage in application is not immediately noticeable until 
\(N\) reaches a moderately large size.
Further increasing the rank could potentially highlight the benefits of HTLR matrices.
Further increasing the rank could reveal the benefits of HTLR matrices.
Additionally, as indicated in the last column of Table \ref{tab:H3DW_err}, the error in this scenario remains small even when a constant rank is employed.
\begin{table}[htbp]
    \centering
     \begin{tabular}{ccccccc}
        \toprule
        \(N\) & \(p\) & \(t_{\construct}\) (s) & speedup & \(m_{\h}\) (GB) & memory saving & \(e_{\apply; \rand}\) \\
        \midrule
        \(32^3\) & 4 & 4.0e+00 & \(1.8  \times\) & 1.4e-01 & \(5.5  \times\) & 2.3e-05 \\ 
        \midrule 
        \(64^3\) & 4 & 2.7e+01 & \(2.0  \times\) & 1.2e+00 & \(7.1  \times\) & 7.4e-06 \\ 
        \midrule 
        \(128^3\) & 4 & 2.2e+02 & \(2.3  \times\) & 9.2e+00 & \(8.6  \times\) & 8.6e-06 \\ 
        \midrule 
        \(256^3\) & 4 & 1.8e+03 & \(2.5  \times\) & 7.4e+01 & \(10.1  \times\) & 1.2e-05 \\ 
        \midrule 
        \(512^3\) & 4 & 1.5e+04 & - & 5.9e+02 & - & 5.4e-05 \\
        \bottomrule
    \end{tabular}
    \caption{Relative application error of HTLR matrices and
    \(\hierarchical\)-matrices for the Gaussian kernel under weak
    admissibility in \(3\)D.} \label{tab:H3DW_err}
\end{table}

\begin{figure}[htbp]
    \centering
    \begin{subfigure}{0.3\textwidth}
        \centering
        \includegraphics[width=\linewidth]{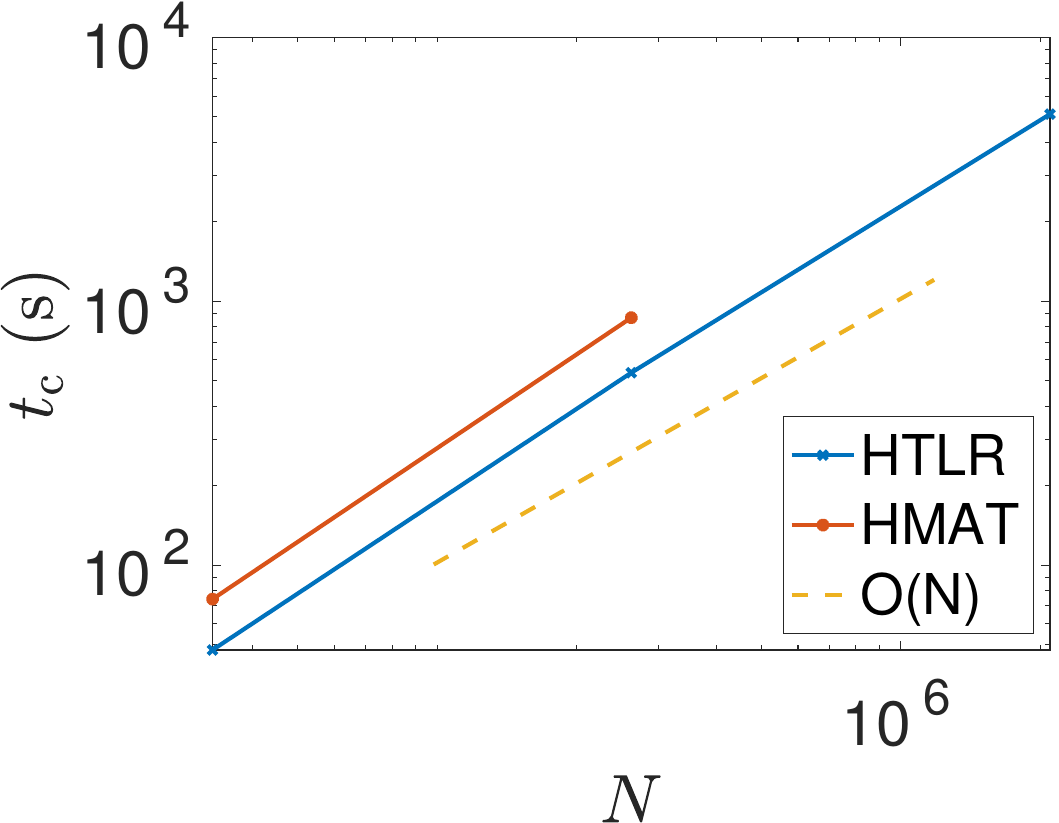}
        \caption{Construction time.}
        \label{fig:H3DS_construct_time}
    \end{subfigure}
    \hspace{0.01\textwidth}
    \begin{subfigure}{0.3\textwidth}
        \centering
        \includegraphics[width=\linewidth]{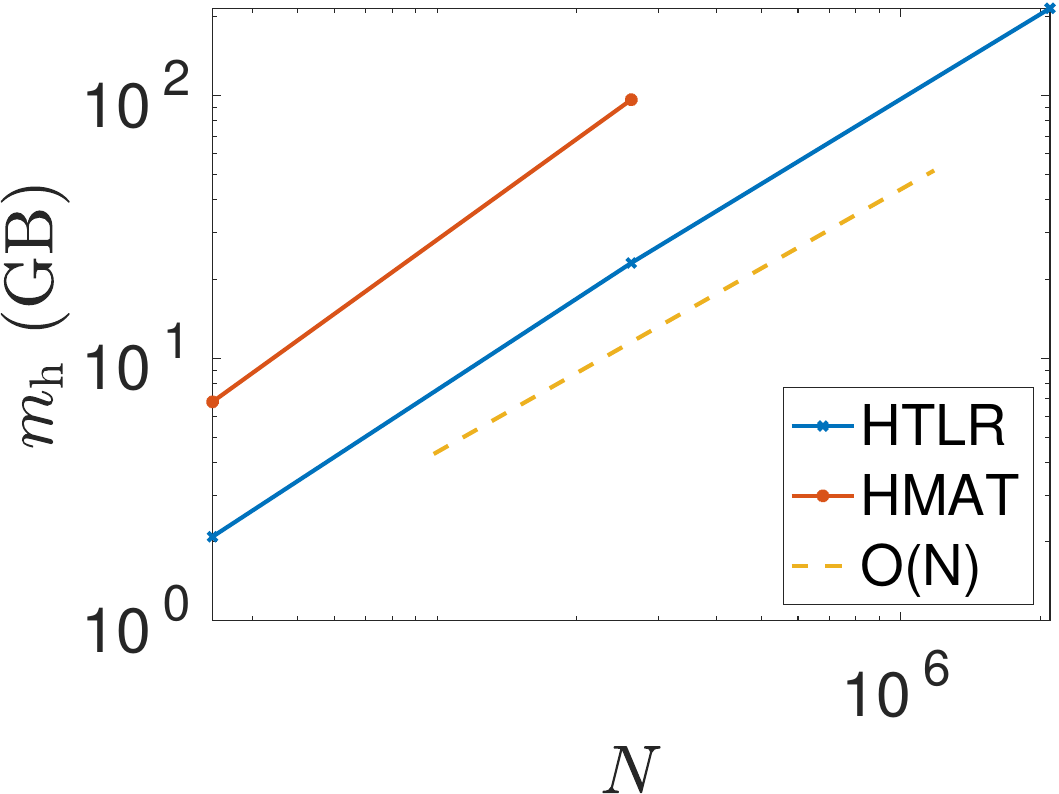}
        \caption{Memory.}
        \label{fig:H3DS_memory}
    \end{subfigure}
    \hspace{0.01\textwidth}
    \begin{subfigure}{0.3\textwidth}
        \centering
        \includegraphics[width=\linewidth]{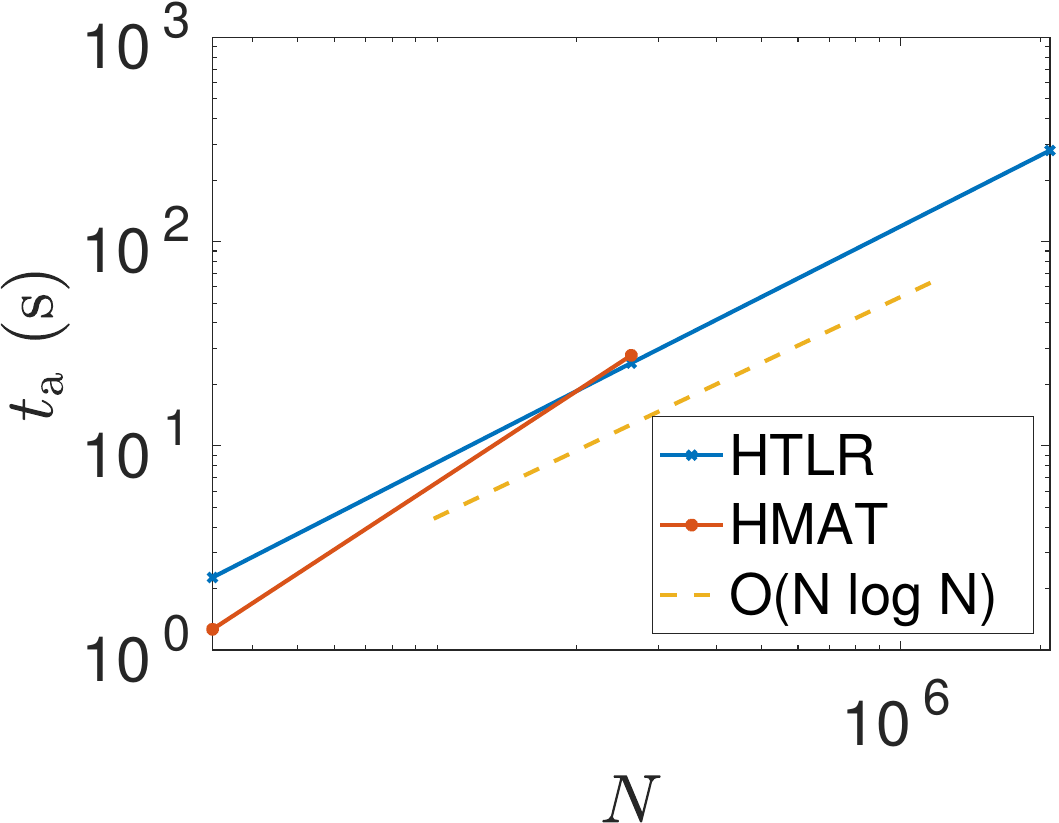}
        \caption{Application time}
        \label{fig:H3DS_hmultv_time}
    \end{subfigure}
    \hspace{0.01\textwidth}
    \caption{Time and memory costs of HTLR matrices and
    \(\hierarchical\)-matrices for the SLP kernel under strong
    admissibility in \(3\)D.}
    \label{fig:H3DS}
\end{figure}
As for the SLP kernel, we discretize the problem with \(n\) points in each dimension for for \(n\) = \(32\), \(64\) and \(128\) respectively.
The corresponding results are summarized in Figure~\ref{fig:H3DS} and Table~\ref{tab:H3DS_err}.
The data point of \(\hierarchical\)-matrices when \(N = 128^3\) is missing due to the memory limitation.

The estimated complexities are consistent with our numerical observations, indicating that HTLR matrices are efficient in both construction (\(1.6\) times faster) and memory usage (\(3\) to \(4\) times lower).
Like the \(2\)D case, the application error under strong admissibility exhibits only mild variation as the matrix size \(N\) increases, which ensures that it remains an accurate and stable approximation of the matrix.
\begin{table}[htbp]
    \centering
    \begin{tabular}{ccccccc}
        \toprule
        \(N\) & \(p\) & \(t_{\construct}\) (s) & speedup & \(m_{\h}\) (GB) & memory saving & \(e_{\apply; \rand}\) \\
        \midrule
        \(32^3\) & 4 & 4.8e+01 & \(1.6  \times\) & 2.1e+00 & \(3.3  \times\) & 1.9e-04 \\ 
        \midrule 
        \(64^3\) & 4 & 5.4e+02 & \(1.6  \times\) & 2.3e+01 & \(4.2  \times\) & 2.6e-04 \\ 
        \midrule 
        \(128^3\) & 4 & 5.1e+03 & - & 2.1e+02 & - & 3.1e-04 \\ 
        \bottomrule
    \end{tabular}
    \caption{Relative application error of HTLR matrices and
    \(\hierarchical\)-matrices for the SLP kernel under strong
    admissibility in \(3\)D.} \label{tab:H3DS_err}
\end{table}

\subsection{Quasi-Uniform Grid in Two Dimensions} \label{subsec:quasiuni_grid_2D}

In this section, we present two examples to illustrate the application of HTLR matrices on quasi-uniform grids.
More specifically, we consider the same domain and kernel functions as discussed in Section~\ref{subsec:uniform_grid_2D}.
The discretization points are given by the triangulation of \(\Omega\) where each \(\Matrix{x}_{i}\) is the the center of the triangular domain (See Figure~\ref{fig:triangulation}).
The matrix size \(N\) is \(8192\), \(32768\), \(131072\) and \(524288\).
Let \(M\) be the number of points of the uniform grid and \(\Matrix{A}\) and \(\Matrix{\hat{A}}\) be the corresponding matrices of quasi-uniform and uniform grid respectively, as discussed in Section~\ref{subsec:applications_of_HTLR_matrices_on_the_quasi_uniform_grid}. 
In this case, \(\Matrix{\hat{A}}\) is represented by its HTLR matrix approximation.
we define the oversampling ratio \(\rho = \sqrt{2 M / N}\), where the factor \(2\) comes from the fact that \(1\) square domain corresponds to \(2\) triangular domains.
The underlying function \(u(\Matrix{x})\) of vector \(\Matrix{u}\) is selected to be
\begin{equation*}
    u(\Matrix{x}) = u(x_1, x_2) = 1 + 0.5 \e^{-(x_1 - 0.3)^2 - (x_2 - 0.6)^2} + \sin(5 x_1 x_2).
\end{equation*}

\begin{figure}[htbp]
    \centering
    \begin{subfigure}{0.35\textwidth}
        \centering
        \includegraphics[width=\linewidth]{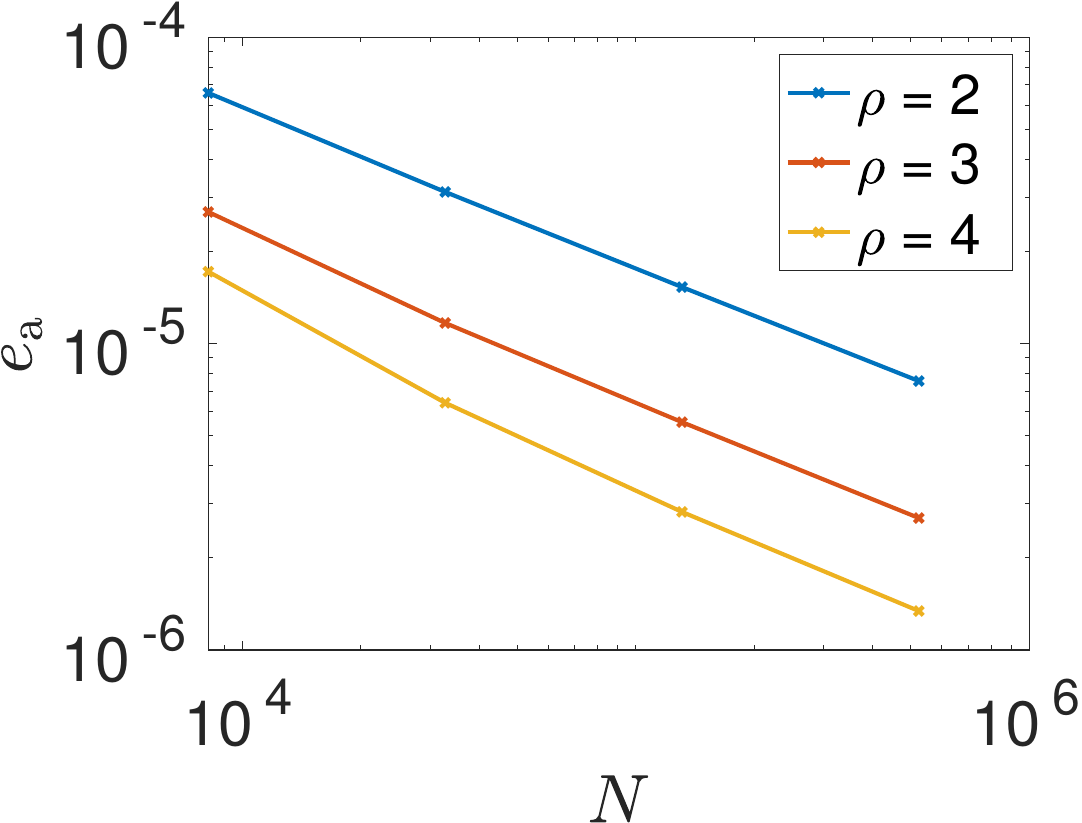}
        \caption{Gaussian kernel.}
        \label{fig:H2DWQ_err}
    \end{subfigure}
    \hspace{0.01\textwidth}
    \begin{subfigure}{0.35\textwidth}
        \centering
        \includegraphics[width=\linewidth]{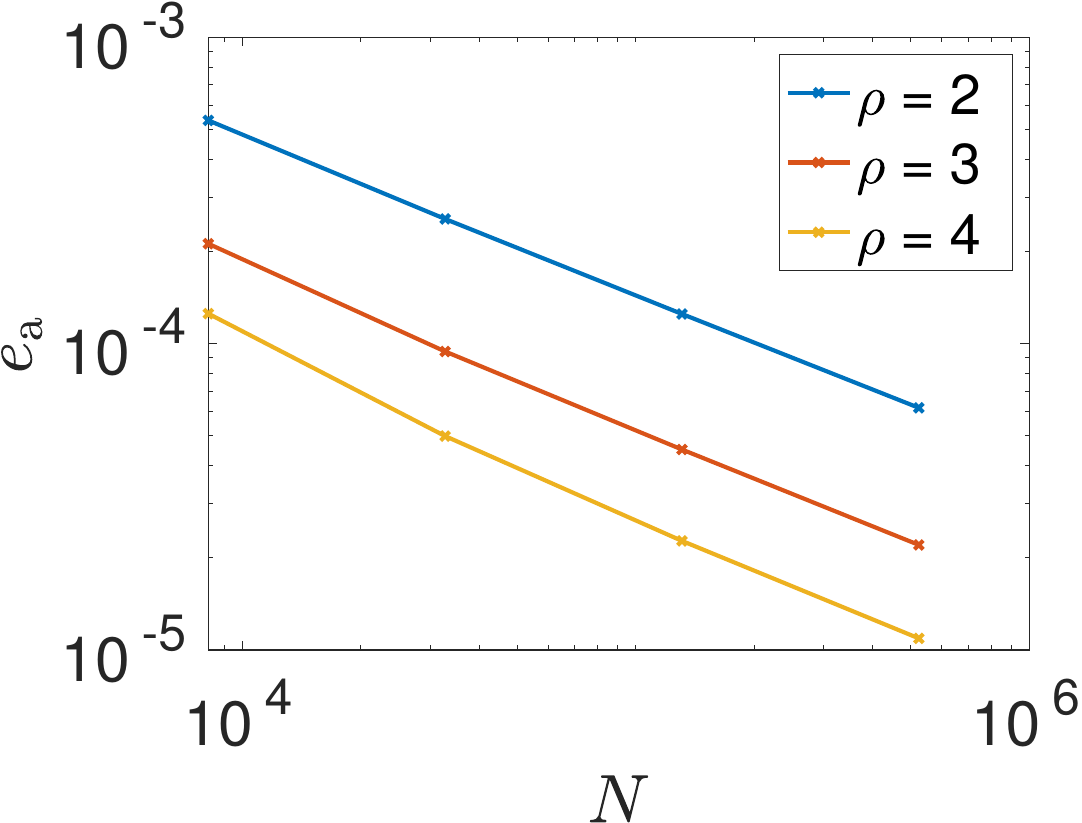}
        \caption{SLP kernel.}
        \label{fig:H2DSQ_err}
    \end{subfigure}
    \caption{Relative application error of HTLR matrices for Gaussian and
    SLP kernel in \(2\)D. Left: Gaussian kernel. Right: SLP kernel.}
    \label{fig:H2DQ}
\end{figure}
Figure~\ref{fig:H2DQ} describes the approximated relative application error of \(\Matrix{\hat{A}}\) for different sizes.
Weak and strong admissibility are adopted for Gaussian kernel and SLP kernel respectively.
For each \(N\), a larger \(\rho\) leads to a smaller error, which aligns with our intuition.
Conversely, for a fixed \(\rho\), the error decreases as \(N\) increases.
This is due to the fact that both the quasi-uniform and uniform grids become finer with larger \(N\)
Particularly, when \(N = 524288\) and \(\rho = 2\), the errors for the Gaussian kernel and SLP kernel are approximately \(10^{-5}\) and \(10^{-4}\) respectively.
In comparison, Tables~\ref{tab:H2DW} and~\ref{tab:H2DS} indicate that the application errors of the HTLR matrix at this size are about \(10^{-10}\) and \(10^{-7}\), suggesting that the dominant error in this case arises from interpolation.
We claim that choosing a small \(\rho\) (for example, \(\rho = 2\)) is sufficient for many practical application, as the corresponding curves have demonstrate farily good performance. 

\section{Conclusion and Future Work} \label{sec:conclusion_future_work}

In this paper, we introduce hierarchical Tucker low-rank matrices and present the corresponding algorithms for construction and matrix-vector multiplication.
When the underlying discretization exhibits a tensor structure, HTLR matrices are generally more efficient than \(\hierarchical\)-matrices.
We establish that the complexity for storage and construction is  \(\bigO(N)\), while the complexity for application is \(\bigO(N \log N)\).
Compared to \(\hierarchical\)-matrices, the prefactors of the dominant terms are smaller and usually exhibit a linear dependence on the dimension rather than an exponential dependence.
Furthermore, we demonstrate the application of HTLR matrices on quasi-uniform grids, enhancing their applicability. We also discuss the relevance of Tucker low-rank structures.
Numerical results show that HTLR matrices usually save \(3\) to \(10\) times in memory and provide speedups of \(2\) to \(6\) times compared to \(\hierarchical\)-matrices.

There are several future directions to consider.
One potential avenue is to develop additional algebraic operations for HTLR matrices, including matrix addition, multiplication, and LU decomposition. 
Furthermore, exploring the parallelization of these algorithms could enhance their efficiency and scalability, enabling them to accommodate larger problem sizes effectively.


\bibliographystyle{siamplain}
\bibliography{references}

\begin{thebibliography}{10}

\bibitem{HODLR}
{\sc A.~Aminfar, S.~Ambikasaran, and E.~Darve}, {\em A fast block low-rank dense solver with applications to finite-element matrices}, J. Comput. Phys., 304 (2016), pp.~170--188, \url{https://doi.org/10.1016/j.jcp.2015.10.012}.

\bibitem{Voronoi}
{\sc F.~Aurenhammer}, {\em Voronoi diagrams---a survey of a fundamental geometric data structure}, ACM Comput. Surv., 23 (1991), pp.~345--405, \url{https://doi.org/10.1145/116873.116880}.

\bibitem{Barnes_Hut_1986}
{\sc J.~Barnes and P.~Hut}, {\em A hierarchical {$O(N \log N)$} force-calculation algorithm}, Nature, 324 (1986), pp.~446--449, \url{https://doi.org/10.1038/324446a0}.

\bibitem{Barnett_Magland_Klinteberg_2019}
{\sc A.~H. Barnett, J.~Magland, and L.~af~Klinteberg}, {\em A parallel nonuniform fast fourier transform library based on an ``exponential of semicircle'' kernel}, SIAM J. Sci. Comput., 41 (2019), pp.~C479--C504, \url{https://doi.org/10.1137/18M120885X}.

\bibitem{Bebendorf_2008_book}
{\sc M.~Bebendorf}, {\em Hierarchical Matrices: A means to efficiently solve elliptic boundary value problems}, Lect. Notes Comput. Sci. Eng. 63, Springer Berlin, 1st~ed., 2008.

\bibitem{intro_Hmatrix_H2matrix}
{\sc S.~B{\"o}rm, L.~Grasedyck, and W.~Hackbusch}, {\em Introduction to hierarchical matrices with applications}, Eng. Anal. Boundary Elem., 27 (2003), pp.~405--422, \url{https://doi.org/10.1016/S0955-7997(02)00152-2}.

\bibitem{BF_algorithm}
{\sc E.~Candès, L.~Demanet, and L.~Ying}, {\em A fast butterfly algorithm for the computation of fourier integral operators}, Multiscale Model. Simul., 7 (2009), pp.~1727--1750, \url{https://doi.org/10.1137/080734339}.

\bibitem{adaptive_FMM}
{\sc J.~Carrier, L.~Greengard, and V.~Rokhlin}, {\em A fast adaptive multipole algorithm for particle simulations}, SIAM J. Sci. Statist. Comput., 9 (1988), pp.~669--686, \url{https://doi.org/10.1137/0909044}.

\bibitem{ULV_HSS}
{\sc S.~Chandrasekaran, M.~Gu, and T.~Pals}, {\em A fast {ULV} decomposition solver for hierarchically semiseparable representations}, SIAM J. Matrix Anal. Appl., 28 (2006), pp.~603--622, \url{https://doi.org/10.1137/S0895479803436652}.

\bibitem{ID}
{\sc H.~Cheng, Z.~Gimbutas, P.-G. Martinsson, and V.~Rokhlin}, {\em On the compression of low rank matrices}, SIAM J. Sci. Comput., 26 (2005), pp.~1389--1404, \url{https://doi.org/10.1137/030602678}.

\bibitem{TN1}
{\sc A.~Cichocki, N.~Lee, I.~Oseledets, A.-H. Phan, Q.~Zhao, and D.~P. Mandic}, {\em Tensor networks for dimensionality reduction and large-scale optimization: Part 1 {L}ow-rank tensor decompositions}, Found. Trends Mach. Learn., 9 (2016), pp.~249--429, \url{https://doi.org/10.1561/2200000059}.

\bibitem{Corona_Martinsson_Zorin_2015}
{\sc E.~Corona, P.-G. Martinsson, and D.~Zorin}, {\em An {$O(N)$} direct solver for integral equations on the plane}, Appl. Comput. Harmon. Anal., 38 (2015), pp.~284--317, \url{https://doi.org/10.1016/j.acha.2014.04.002}.

\bibitem{HOSVD}
{\sc L.~De~Lathauwer, B.~De~Moor, and J.~Vandewalle}, {\em A multilinear singular value decomposition}, SIAM J. Matrix Anal. Appl., 21 (2000), pp.~1253--1278, \url{https://doi.org/10.1137/S0895479896305696}.

\bibitem{HOOI}
{\sc L.~De~Lathauwer, B.~De~Moor, and J.~Vandewalle}, {\em On the best rank-$1$ and rank-{$(R_1, R_2, \dotsc , R_N)$} approximation of higher-order tensors}, SIAM J. Matrix Anal. Appl., 21 (2000), pp.~1324--1342, \url{https://doi.org/10.1137/S0895479898346995}.

\bibitem{black_box_FMM}
{\sc W.~Fong and E.~Darve}, {\em The black-box fast multipole method}, J. Comput. Phys., 228 (2009), pp.~8712--8725, \url{https://doi.org/10.1016/j.jcp.2009.08.031}.

\bibitem{Gavrilyuk_Wolfgang_Khoromskij_2005}
{\sc I.~P. Gavrilyuk, W.~Hackbusch, and B.~N. Khoromskij}, {\em Hierarchical tensor-product approximation to the inverse and related operators for high-dimensional elliptic problems}, Computing, 74 (2005), pp.~131--157, \url{https://doi.org/10.1007/s00607-004-0086-y}.

\bibitem{Gillman_Young_Martinsson_2012}
{\sc A.~Gillman, P.~M. Young, and P.-G. Martinsson}, {\em A direct solver with {$O(N)$} complexity for integral equations on one-dimensional domains}, Front. Math. China, 7 (2012), pp.~217--247, \url{https://doi.org/10.1007/s11464-012-0188-3}.

\bibitem{construction_arithmetics_Hmatrix}
{\sc L.~Grasedyck and W.~Hackbusch}, {\em Construction and arithmetics of {$\mathcal{H}$}-matrices}, Computing, 70 (2003), pp.~295--334, \url{https://doi.org/10.1007/s00607-003-0019-1}.

\bibitem{Greengard_Gueyffier_Martinsson_Rokhlin_2009}
{\sc L.~Greengard, D.~Gueyffier, P.-G. Martinsson, and V.~Rokhlin}, {\em Fast direct solvers for integral equations in complex three-dimensional domains}, Acta Numer., 18 (2009), pp.~243--275, \url{https://doi.org/10.1017/S0962492906410011}.

\bibitem{Greengard_Lee_2004}
{\sc L.~Greengard and J.-Y. Lee}, {\em Accelerating the nonuniform fast {Fourier} transform}, SIAM Rev., 46 (2004), pp.~443--454, \url{https://doi.org/10.1137/S003614450343200X}.

\bibitem{classical_FMM}
{\sc L.~Greengard and V.~Rokhlin}, {\em A fast algorithm for particle simulations}, J. Comput. Phys., 135 (1997), pp.~280--292, \url{https://doi.org/10.1006/jcph.1997.5706}.

\bibitem{Hmatrix_partI}
{\sc W.~Hackbusch}, {\em A sparse matrix arithmetic based on {$\mathcal{H}$}-matrices. {Part I}: Introduction to {$\mathcal{H}$}-matrices}, Computing, 62 (1999), pp.~89--108, \url{https://doi.org/10.1007/s006070050015}.

\bibitem{Hackbusch_2015_book}
{\sc W.~Hackbusch}, {\em Hierarchical matrices: Algorithms and analysis}, Springer Ser. Comput. Math. 49, Springer, 1st~ed., 2015.

\bibitem{H2matrix}
{\sc W.~Hackbusch and S.~B{\"o}rm}, {\em Data-sparse approximation by adaptive {$\mathcal{H}^2$}-matrices}, Computing, 69 (2002), pp.~1--35, \url{https://doi.org/10.1007/s00607-002-1450-4}.

\bibitem{Hmatrix_partII}
{\sc W.~Hackbusch and B.~N. Khoromskij}, {\em A sparse {$\mathcal{H}$}-matrix arithmetic. {Part II}: Application to multi-dimensional problems}, Computing, 64 (2000), pp.~21--47, \url{https://doi.org/10.1007/PL00021408}.

\bibitem{Hackbusch_Khoromskij_2006a}
{\sc W.~Hackbusch and B.~N. Khoromskij}, {\em Low-rank {Kronecker-product} approximation to multi-dimensional nonlocal operators. {Part I}. {Separable} approximation of multi-variate functions}, Computing, 76 (2006), pp.~177--202, \url{https://doi.org/10.1007/s00607-005-0144-0}.

\bibitem{Hackbusch_Khoromskij_2006b}
{\sc W.~Hackbusch and B.~N. Khoromskij}, {\em Low-rank {Kronecker-product} approximation to multi-dimensional nonlocal operators. {Part II}. {HKT} representation of certain operators}, Computing, 76 (2006), pp.~203--225, \url{https://doi.org/10.1007/s00607-005-0145-z}.

\bibitem{Hackbusch_Khoromskij_2008}
{\sc W.~Hackbusch and B.~N. Khoromskij}, {\em Tensor-product approximation to multidimensional integral operators and {Green's} functions}, SIAM J. Matrix Anal. Appl., 30 (2008), pp.~1233--1253, \url{https://doi.org/10.1137/060657017}.

\bibitem{Hackbusch_Khoromskij_Tyrtyshnikov_2005}
{\sc W.~Hackbusch, B.~N. Khoromskij, and E.~E. Tyrtyshnikov}, {\em Hierarchical {Kronecker} tensor-product approximations}, J. Numer. Math., 13 (2005), pp.~119--156, \url{https://doi.org/doi:10.1515/1569395054012767}.

\bibitem{RTSMS}
{\sc B.~Hashemi and Y.~Nakatsukasa}, {\em {RTSMS}: Randomized {Tucker} with single-mode sketching}, arXiv.org,  (2023), \url{https://doi.org/10.48550/arXiv.2311.14873}.

\bibitem{RS}
{\sc K.~L. Ho and L.~Greengard}, {\em A fast direct solver for structured linear systems by recursive skeletonization}, SIAM J. Sci. Comput., 34 (2012), pp.~A2507--A2532, \url{https://doi.org/10.1137/120866683}.

\bibitem{HIFIE}
{\sc K.~L. Ho and L.~Ying}, {\em Hierarchical interpolative factorization for elliptic operators: Integral equations}, Commun. Pure Appl. Math., 69 (2016), pp.~1314--1353, \url{https://doi.org/10.1002/cpa.21577}.

\bibitem{TensorBF}
{\sc P.~M. Kielstra, T.~Shi, H.~Luo, J.~Qian, and Y.~Liu}, {\em A linear-complexity tensor butterfly algorithm for compressing high-dimensional oscillatory integral operators}, Multiscale Model. Simul.,  (2025), pp.~864--893, \url{https://doi.org/10.1137/24M1707821}.

\bibitem{Kolda_Bader_2009}
{\sc T.~G. Kolda and B.~W. Bader}, {\em Tensor decompositions and applications}, SIAM Rev., 51 (2009), pp.~455--500, \url{https://doi.org/10.1137/07070111X}.

\bibitem{DHMAT}
{\sc Y.~Li, J.~Poulson, and L.~Ying}, {\em Distributed-memory {$\mathcal{H}$}-matrix algebra {I}: Data distribution and matrix-vector multiplication}, CSIAM Trans. Appl. Math., 2 (2021), pp.~431--459, \url{https://doi.org/https://doi.org/10.4208/csiam-am.2020-0206}.

\bibitem{IBF}
{\sc Y.~Li and H.~Yang}, {\em Interpolative butterfly factorization}, SIAM J. Sci. Comput., 39 (2017), pp.~A503--A531, \url{https://doi.org/10.1137/16M1074941}.
\newblock 10.1137/16M1074941.

\bibitem{BF}
{\sc Y.~Li, H.~Yang, E.~R. Martin, K.~L. Ho, and L.~Ying}, {\em Butterfly factorization}, Multiscale Model. Simul., 13 (2015), pp.~714--732, \url{https://doi.org/10.1137/15M1007173}.

\bibitem{MBF}
{\sc Y.~Li, H.~Yang, and L.~Ying}, {\em Multidimensional butterfly factorization}, Appl. Comput. Harmon. Anal., 44 (2018), pp.~737--758, \url{https://doi.org/10.1016/J.ACHA.2017.04.002}.

\bibitem{Martinsson_2019_book}
{\sc P.-G. Martinsson}, {\em Fast direct solvers for elliptic PDEs}, Fast direct solvers for elliptic partial differential equations, Society for Industrial Applied Mathematics, 2019.

\bibitem{Martinsson_Rokhlin_2005}
{\sc P.-G. Martinsson and V.~Rokhlin}, {\em A fast direct solver for boundary integral equations in two dimensions}, J. Comput. Phys., 205 (2005), pp.~1--23, \url{https://doi.org/10.1016/j.jcp.2004.10.033}.

\bibitem{RS_strong_admissibility}
{\sc V.~Minden, K.~L. Ho, A.~Damle, and L.~Ying}, {\em A recursive skeletonization factorization based on strong admissibility}, Multiscale Model. Simul., 15 (2017), pp.~768--796, \url{https://doi.org/10.1137/16M1095949}.

\bibitem{RTucker}
{\sc R.~Minster, A.~K. Saibaba, and M.~E. Kilmer}, {\em Randomized algorithms for low-rank tensor decompositions in the {Tucker} format}, SIAM J. Math. Data Sci., 2 (2020), pp.~189--215, \url{https://doi.org/10.1137/19M1261043}.

\bibitem{TT}
{\sc I.~V. Oseledets}, {\em Tensor-train decomposition}, SIAM J. Sci. Comput., 33 (2011), pp.~2295--2317, \url{https://doi.org/10.1137/090752286}.

\bibitem{Trefethen_2019_book}
{\sc L.~N. Trefethen}, {\em Approximation theory and approximation practice}, SIAM, 2019.

\bibitem{STHOSVD}
{\sc N.~Vannieuwenhoven, R.~Vandebril, and K.~Meerbergen}, {\em A new truncation strategy for the higher-order singular value decomposition}, SIAM J. Sci. Comput., 34 (2012), pp.~A1027--A1052, \url{https://doi.org/10.1137/110836067}.

\bibitem{BBF_theory}
{\sc R.~Wang, Y.~Li, and E.~F. Darve}, {\em On the numerical rank of radial basis function kernels in high dimensions}, SIAM J. Matrix Anal. Appl., 39 (2018), pp.~1810--1835, \url{https://doi.org/10.1137/17M1135803}.

\bibitem{BBF}
{\sc R.~Wang, Y.~Li, M.~W. Mahoney, and E.~Darve}, {\em Block basis factorization for scalable kernel evaluation}, SIAM J. Matrix Anal. Appl., 40 (2019), pp.~1497--1526, \url{https://doi.org/10.1137/18M1212586}.

\bibitem{fast_HSS}
{\sc J.~Xia, S.~Chandrasekaran, M.~Gu, and X.~S. Li}, {\em Fast algorithms for hierarchically semiseparable matrices}, Numer. Linear Algebra Appl., 17 (2010), pp.~953--976, \url{https://doi.org/10.1002/nla.691}.

\bibitem{kernel_independent_FMM}
{\sc L.~Ying, G.~Biros, and D.~Zorin}, {\em A kernel-independent adaptive fast multipole algorithm in two and three dimensions}, J. Comput. Phys., 196 (2004), pp.~591--626, \url{https://doi.org/10.1016/j.jcp.2003.11.021}.

\end{thebibliography}

\end{document}